\documentclass[11pt,oneside]{amsart}

\usepackage{amsmath,ifthen, amsfonts, amssymb,
srcltx, amsopn, color, enumerate, mathrsfs, overpic}
\usepackage[cmtip,arrow]{xy}
\usepackage{pb-diagram, pb-xy}
\usepackage{overpic}
\usepackage{subfig}

\usepackage[T1]{fontenc}
\usepackage{yfonts}

\dgARROWLENGTH=1em

\usepackage{graphicx}

 %FOR pdflatex

\newcommand{\showcomments}{yes}
\renewcommand{\showcomments}{no}

\newsavebox{\commentbox}
\newenvironment{com}%
% begin comment
{\ifthenelse{\equal{\showcomments}{yes}}%
% then begin comment in margin
{\footnotemark
        \begin{lrbox}{\commentbox}
        \begin{minipage}[t]{1.25in}\raggedright\sffamily\tiny
        \footnotemark[\arabic{footnote}]}
% else eat contents of the environment
{\begin{lrbox}{\commentbox}}}%
% end comment
{\ifthenelse{\equal{\showcomments}{yes}}%
% then end comment
{\end{minipage}\end{lrbox}\marginpar{\usebox{\commentbox}}}
% else finish eating
{\end{lrbox}}}

\newtheorem{thm}{Theorem}[section]

\newtheorem{lem}[thm]{Lemma}

\newtheorem{cor}[thm]{Corollary}

\newtheorem{prop}[thm]{Proposition}

\theoremstyle{definition}
\newtheorem{defn}[thm]{Definition}
\newtheorem{rem}[thm]{Remark}

\newtheorem{claim*}{Claim}

\DeclareMathOperator{\dimension}{dim}

\DeclareMathOperator{\link}{lk}

\DeclareMathOperator{\stabilizer}{Stab}

\DeclareMathOperator{\SL}{SL}

\newcommand{\Z}{\ensuremath{\mathbb{Z}}}

\newcommand{\field}[1]{\mathbb{#1}}
\newcommand{\integers}{\ensuremath{\field{Z}}}

\newcommand{\Euclidean}{\ensuremath{\field{E}}}

\newcommand{\interior} [1] {{\ensuremath \text{\rm Int}(#1) }}

\makeatletter

\newcommand{\Rmnum}[1]{\mathbf{{\expandafter\@slowromancap\romannumeral #1@}}}

\newcommand{\lcm}{\ensuremath{\mathrm{l}\mathrm{c}\mathrm{m}}}

\makeatother

\setlength{\marginparwidth}{1.2in}
\let\oldmarginpar\marginpar
\renewcommand\marginpar[1]{\-\oldmarginpar[\raggedleft\footnotesize #1]%
{\raggedright\footnotesize #1}}

%{\hat{d}}
%{\hat{d}}

%For ``continuing enumerations'' beyond intervening text
\newcounter{enumitemp}

\newcommand{\N}{\ensuremath{\mathbb{N}}}

\newcommand{\dist}{\textup{\textsf{d}}}
\DeclareMathOperator{\F}{RF}
\DeclareMathOperator{\DM}{F}
\DeclareMathOperator{\verts}{\text{Vertices}}
\DeclareMathOperator{\edges}{\text{Edges}}

\newcommand{\cancom}[2]{\complement\left({#1}\rightarrow{#2}\right)}

%%%%%%%%%%%%%DIMENSIONS%%%%%%%%%%%%%
\setlength{\textwidth}{6.1in}
\setlength{\textheight}{8.5in}
\hoffset=-.61 in
%\voffset=-0.0 in
%%%%%%%%%%%%%DIMENSIONS%%%%%%%%%%%%%

\begin{document}
\title{Residual finiteness growths of virtually special groups}
\author[K. Bou-Rabee]{Khalid bou-Rabee}
\address{School of Mathematics, University of Minnesota--Twin Cities, Minneapolis, Minnesota, USA}
\email{khalid.math@gmail.com}
\author[M.F. Hagen]{Mark F. Hagen}
\address{Department of Mathematics, University of Michigan, Ann Arbor, Michigan, USA}
\email{markfhagen@gmail.com}
\author[P. Patel]{Priyam Patel}
\address{Department of Mathematics, Purdue University, West Lafayette, Indiana, USA}
\email{patel376@math.purdue.edu}
\date{\today}

\begin{abstract}
Let $G$ be a virtually special group. Then the residual finiteness growth of $G$ is at most linear.
This result cannot be found by embedding $G$ into a special linear group.
Indeed, the special linear group $\SL_k(\Z)$, for $k > 2$, has residual finiteness growth $n^{k-1}$.
\end{abstract}
\subjclass[2010]{Primary: 20E26; Secondary: 20F65, 20F36}
\keywords{Residual finiteness growth, special cube complex, right-angled Artin group}
\maketitle
\setcounter{tocdepth}{2}

\tableofcontents

\section{Introduction}\label{sec:introduction}

This paper demonstrates that Stalling's topological proof of the residual finiteness  of free groups \cite[Theorem 6.1]{stallings} extends efficiently to the class of right-angled Artin groups (and, more generally, to virtually special groups).
To state our results, we use notation developed in \cite{BM10}, \cite{BM13}, \cite{MR2851069} for quantifying residual finiteness.
Let $A$ be a residually finite group with generating set $X$.
The \emph{divisibility function} $D_A : A \setminus\{1\} \to \N$ is defined by
$$
D_A(g) = \min \{ [ A : B] : g \notin B \wedge B \leq A \}. $$

Define $\DM_{A,X}(n)$ to be the maximal value of $D_A$ on the set
$$
\left\{ g :\| g  \|_X \leq n \: \wedge \: g \neq 1 \right \},
$$
\noindent
where $\| \cdot \|_X$ is the word-length norm with respect to $X$.  The growth of $\DM_{A,X}$ is called the \emph{residual finiteness growth function}.  The growth of $\DM_{A,X}(n)$ is, up to a natural equivalence, independent of the choice of generating set (see \S \ref{sec:quantifying} for this and generalizations of $\DM_{A,X}$).
Loosely speaking, residual finiteness growth measures how well a group is approximated by its profinite completion.

Our first result, proved in \S \ref{sec:separability}, gives bounds on the residual finiteness growth of any right-angled Artin group:

\begin{thm}\label{thm:ssraag}\label{thm:introraag}
Let $\Gamma$ be a simplicial graph.
Let $A_{\Gamma}$ be the corresponding right-angled Artin group with standard generating set $X$.
Then $\DM_{A_\Gamma, X}(n) \leq n+1$.
\end{thm}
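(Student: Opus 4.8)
The plan is to mimic Stallings's topological proof for free groups, using the Salvetti complex $S_\Gamma$ in place of a rose and taking care of the square $2$-cells. Fix a nontrivial $g\in A_\Gamma$, set $m=\|g\|_X$, and choose a geodesic word $w=s_1s_2\cdots s_m$ (with $s_i\in X\cup X^{-1}$) representing $g$; the loop traced by $w$ in $S_\Gamma$ lifts to an edge-path with $m+1$ vertices in the universal cover, which we label $0,1,\dots,m$ in order of traversal. Put $P=\{0,1,\dots,m\}$ and, for each vertex $v$ of $\Gamma$, define a partial injection $\sigma_v$ of $P$ by $\sigma_v(i-1)=i$ whenever $s_i=v$ and $\sigma_v(i)=i-1$ whenever $s_i=v^{-1}$. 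Since $w$ is reduced these clauses never clash, so $\sigma_v$ is a well-defined partial bijection, and $\sum_v\lvert\operatorname{dom}\sigma_v\rvert=m$.

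The crux is a completion statement: the $\sigma_v$ can be extended to permutations $\pi_v\in\operatorname{Sym}(P)$ with $\pi_a\pi_b=\pi_b\pi_a$ for every edge $\{a,b\}$ of $\Gamma$. Granting this, let $A_\Gamma$ act on the right of $P$ with each $v\in X$ acting as $\pi_v$; this is a genuine action precisely because the only defining relations of $A_\Gamma$ are the commutations of adjacent generators. Chasing the letters of $w$ through this action sends the point $0$ to the point $m\ne0$, so $g$ lies outside the stabilizer $B$ of $0$; and $[A_\Gamma:B]=\lvert 0\cdot A_\Gamma\rvert\le\lvert P\rvert=m+1$. Thus $D_{A_\Gamma}(g)\le m+1$, and since this holds for every nontrivial $g$ with $\|g\|_X\le n$, we obtain $\DM_{A_\Gamma,X}(n)\le n+1$.

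The completion statement is the only step that goes beyond Stallings's free-group argument, and it is the main obstacle. If $\Gamma$ has no edges there is nothing to coordinate: one extends each $\sigma_v$ to a permutation of $P$ arbitrarily, exactly as Stallings does. For general $\Gamma$ the extensions must be made to commute where required, and — this is the delicate point — without enlarging $P$. I would first reduce to the case that every generator occurs in $w$ (otherwise restrict to the full subgraph spanned by the occurring generators) and handle the connected components of $\Gamma$ independently, as distinct components impose no mutual constraints. Within a single component the goal is to build all the $\pi_v$ from one family of pairwise-commuting permutations of $P$: powers of a single $(m+1)$-cycle suffice when each generator occurs in $w$ with only one sign, and in general one argues block by block, using that the intervals of $P$ supporting the $\sigma_v$ contributed by the maximal $v$-runs of $w$ are pairwise disjoint, and that adjacent commuting letters of $w$ may be transposed at will. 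Proving that such a coherent completion always exists on $P$ itself — a Stallings-type completion that simultaneously fills in the missing squares — is the real content, and it is precisely what upgrades Stallings's residual finiteness theorem from free groups to right-angled Artin groups.
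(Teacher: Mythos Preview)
Your reduction to the ``completion statement'' is exactly right, and once that statement is granted the deduction of $D_{A_\Gamma}(g)\le m+1$ is clean. The gap is that you have not proved the completion statement, and the hints you give do not add up to a proof. The single-cycle trick does dispose of words in which every generator keeps one sign: set each $\pi_v$ equal to the $(m+1)$-cycle $(0,1,\dots,m)$ or its inverse according to that sign, and all the $\pi_v$ commute because they are powers of one cycle. But the moment some generator $v$ appears with both signs this breaks, and your ``block by block'' remark does not explain how to patch it. Commutation moves on $w$ cannot help here: in a raAg all geodesic words for a fixed element share the same multiset of letters, so mixed signs persist under any such shuffling. In a small example like $w=abca^{-1}$ on the path $a\!-\!b\!-\!c$ one already sees that the obvious cycle fails for $\pi_a$ and a genuinely different choice of $\pi_a$ (and then compatible $\pi_b,\pi_c$) must be found; you give no mechanism for producing such choices in general, let alone simultaneously for all generators subject to all edge constraints.

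The paper does \emph{not} prove your completion statement directly; it sidesteps it. Note that canonical completion (Theorem~\ref{thm:cancom} and Lemma~\ref{lem:Khalids_picture}) requires a \emph{local isometry} $Y\to S_\Gamma$, and the bare geodesic path is only locally injective---at a vertex where two consecutive edges carry commuting labels, the corresponding corner of a $2$-cube is present in $S_\Gamma$ but absent from the path, so the link is not full. The paper's work in Section~\ref{sec:separability} is precisely to manufacture a compact complex $\overline K$ with at most $n+1$ vertices which \emph{is} a local isometry to $S_\Gamma$: it takes the convex hull $K$ of $\{\tilde v,g\tilde v\}$, threads a piecewise-geodesic through a chain of hyperplane frames, collapses the hyperplane directions in each frame, and then glues in the missing $2$-tori. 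Only after this does Lemma~\ref{lem:cancom_one_step} yield the degree-$(n+1)$ cover. Translated into your language, the paper does build commuting permutations on a set of size at most $n+1$, but that set is the vertex set of $\overline K$, not the vertex set of your chosen geodesic, and the commutation is guaranteed by Lemma~\ref{lem:Khalids_picture} rather than by any cycle-power argument. If you want to salvage your approach you must either prove your completion statement on $P$ itself by some new combinatorial argument, or explain how to replace $P$ by a set of the same size on which the required commuting extensions visibly exist; the paper does the latter, via cube-complex geometry.
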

\noindent
The canonical completion \cite{haglundwise}, a pillar from the structure theory of special groups, plays an integral role in our proof of Theorem \ref{thm:ssraag}.
Thus, we include a complete proof of a quantified version of the canonical completion construction in \S \ref{subsubsec:can_com}.

Our residual finiteness bound for right-angled Artin groups, hereafter known as \emph{raAgs}, is an \emph{efficient} extension of Stalling's proof of \cite[Theorem 6.1]{stallings}.
Indeed, the bound achieved over the class of all raAgs exactly coincides with that given by a direct application of Stallings' proof in the case $A_\Gamma = \Z * \Z$.
Further, Theorem \ref{thm:ssraag} immediately extends to many other groups.
That is, since bounds on residual finiteness growth are inherited by passing to subgroups and to super groups of finite-index (see \S\ref{sec:quantifying}),
Theorem~\ref{thm:introraag} gives bounds on residual finiteness growth for any group that virtually embeds into a raAg, the so-called \emph{virtually special groups} (defined in Section \ref{subsubsec:salvetti}).
This class includes, for example, Coxeter groups \cite{MR1983376, MR2646113}, free-by-$\Z$ groups with irreducible atoroidal monodromy \cite{hagen_wise_irred}, hyperbolic 3-manifold groups \cite{Wise:QCH, Agol:virtual_haken},  fundamental groups of many aspherical graph manifolds \cite{PrzytyckiWise:cube_GraphManifold, liu_graph}, fundamental groups of mixed 3-manifolds \cite{PrzytyckiWise:mixed}, and random groups at sufficiently low density \cite{Agol:virtual_haken, OllivierWise:Density}.
See also \cite{Wilton} for more explicit results in the 3-manifold case.

Our second result gives a sense of the topological nature of Theorem~\ref{thm:introraag}. It is known that any finitely generated raAg embeds in $\SL_k(\Z)$ for appropriately-chosen $k$, and that $k>2$ unless the raAg in question is free \cite{MR1815219}, \cite{MR1888796}, \cite{MR1942303}.  However, Theorem~\ref{thm:arithmetic} shows that the best upper bound on residual finiteness growth of the raAg that can be inferred in this manner is superlinear (c.f. \cite{BK12}, where the normal residual finiteness growth for $\SL_k(\Z), k>2,$ is shown to be $n^{k^2-1}$).

\begin{thm} \label{thm:arithmetic}
The residual finiteness growth of $\SL_k(\Z)$, $k > 2$, is bounded above by $Cn^{k-1}$ and below by $1/C n^{k-1}$ for some fixed $C > 0$.
That is, $\DM_{\SL_k(\Z)} (n) \simeq n^{k-1}$.
\end{thm}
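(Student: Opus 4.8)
The plan is to prove the two bounds separately, in both cases exploiting the congruence subgroup property of $\SL_k(\Z)$, $k>2$ (Bass--Milnor--Serre), together with the natural permutation actions of $\SL_k$ over finite quotients of $\Z$; by the generating-set independence from \S\ref{sec:quantifying} I take $X$ to be the elementary matrices $E_{ij}(\pm1)$. For the upper bound, fix $g\in\SL_k(\Z)\setminus\{1\}$ with $\|g\|_X\le n$. If $g$ is scalar over $\Z$ then $g=\pm I$, and as there are only finitely many such $g$ we get $D_{\SL_k(\Z)}(g)=O(1)$ (separate $-I$ via the action on $(\field{Z}/3\field{Z})^k$). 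Otherwise some off-diagonal entry $g_{ij}$ or some difference $g_{ii}-g_{jj}$ is a nonzero integer; each generator multiplication at most doubles the sup-norm of a matrix, so this integer is at most $2^{n+1}$ in absolute value, hence by Chebyshev's estimate $\prod_{p\le x}p=e^{(1+o(1))x}$ it fails to be divisible by some prime $p\le Cn$. For such $p$ the reduction $\bar g\in\SL_k(\field{Z}/p\field{Z})$ is non-scalar, hence moves some line $\ell\in\field{P}^{k-1}(\field{Z}/p\field{Z})$, so $g\notin\stabilizer_{\SL_k(\Z)}(\ell)$, a subgroup of index $\le|\field{P}^{k-1}(\field{Z}/p\field{Z})|=(p^k-1)/(p-1)\le C'n^{k-1}$. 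Therefore $\DM_{\SL_k(\Z)}(n)\le C'n^{k-1}$.

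For the lower bound the idea is that an element buried deep in the congruence filtration of a local factor can be separated only by a subgroup acting almost faithfully on a large quotient. For $t\in\N$ put $m_t=\lcm(1,\dots,t)$ and $g_t=E_{12}(m_t)$, so $g_t-I$ has $(1,2)$-entry $m_t$ and all other entries $0$. By Chebyshev $\log m_t\sim t$; by the Lubotzky--Mozes--Raghunathan theorem the word metric on $\SL_k(\Z)$ ($k>2$) is quasi-isometric to the symmetric-space metric, so the unipotent $E_{12}(m_t)$ is exponentially distorted and $\|g_t\|_X\le C_0\log m_t\le C_0't$. Put $g_n:=g_{\lfloor n/C_0'\rfloor}$, so $\|g_n\|_X\le n$. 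I claim $D_{\SL_k(\Z)}(g_n)\ge\frac1Cn^{k-1}$.

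So let $B\le\SL_k(\Z)$ with $g_n\notin B$. By the congruence subgroup property $B\supseteq\Gamma(N):=\ker(\SL_k(\Z)\to\SL_k(\field{Z}/N\field{Z}))$ for some $N$ with $g_n\notin\Gamma(N)$, i.e.\ $N\nmid m$ where $m=m_{\lfloor n/C_0'\rfloor}$. Since reduction mod $N$ is onto, $\overline B:=B/\Gamma(N)$ has index $[\SL_k(\Z):B]$ in $\SL_k(\field{Z}/N\field{Z})=\prod_{p^a\|N}\SL_k(\field{Z}/p^a\field{Z})$. Choose a prime $p$ with $a:=v_p(N)>v_p(m)=:c$; then $p^{c+1}>\lfloor n/C_0'\rfloor$ in every case. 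In the factor $\SL_k(\field{Z}/p^a\field{Z})$ the image of $g_n$ is $I+up^cE_{12}$ with $u$ a unit — a nontrivial element $c$ steps below the top of the congruence filtration. By the classification of normal subgroups of $\SL_k$ over the local ring $\field{Z}/p^a\field{Z}$ (valid for $k>2$), every normal subgroup lies between some $\Gamma(p^j)$ and $\Gamma(p^j)\cdot Z$ with $Z$ the (bounded) centre; hence the only normal subgroups missing $I+up^cE_{12}$ lie in $\Gamma(p^{c+1})\cdot Z$, so the core of any subgroup of $\SL_k(\field{Z}/p^a\field{Z})$ missing this element lies in $\Gamma(p^{c+1})\cdot Z$. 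Such a subgroup thus has index at least the minimal degree of a permutation action of $\SL_k(\field{Z}/p^{c+1}\field{Z})$ with central kernel, which a standard argument shows to be $\asymp p^{(c+1)(k-1)}$ — realised up to constants by the action on $\field{P}^{k-1}(\field{Z}/p^{c+1}\field{Z})$ — and so at least $\frac1Cp^{(c+1)(k-1)}\ge\frac1C(n/C_0')^{k-1}$. Finally one descends from this single factor to the product: for distinct primes the groups $\SL_k(\field{Z}/p_i^{a_i}\field{Z})$ have uniformly bounded centres and share no non-abelian composition factor (apart from the finitely many sporadic isomorphisms among the $\mathrm{PSL}_k(\field{F}_p)$, which affect only finitely many $n$), so a Goursat-and-core argument bounds $[\SL_k(\field{Z}/N\field{Z}):\overline B]$ below, up to a constant, by the index of the projection of $\overline B$ to the relevant factor. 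Hence $[\SL_k(\Z):B]\ge\frac1Cn^{k-1}$ for every such $B$, so $\DM_{\SL_k(\Z)}(n)\ge\frac1Cn^{k-1}$, and with the upper bound $\DM_{\SL_k(\Z)}(n)\simeq n^{k-1}$.

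The main obstacle is twofold. First, the word-length bound $\|E_{12}(m_t)\|_X=O(\log m_t)$ really requires the Lubotzky--Mozes--Raghunathan theorem; the elementary commutator identity $[E_{13}(a),E_{32}(b)]=E_{12}(ab)$ alone only yields $\|E_{12}(m)\|_X=O((\log m)^2)$, which would degrade the lower bound to $n^{(k-1)/2}$. Second, the passage from $\SL_k(\field{Z}/N\field{Z})$ to a single local factor has to rule out ``diagonal'' small-index subgroups of the product, which is exactly where the (near-)absence of common composition factors enters, and it rests on pinning down the minimal faithful permutation degree of $\SL_k(\field{Z}/p^a\field{Z})$ — obtained from the subgroup structure of $\SL_k(\field{F}_p)$ by a congruence-filtration induction.
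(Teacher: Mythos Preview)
Your upper bound is essentially the paper's: reduce modulo a prime $p\le Cn$ where $g$ is non-central, and separate using (a conjugate of) a maximal parabolic. Your projective-space stabilizer is exactly the paper's subgroup $\Delta$.

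Your lower bound, however, takes a very different route from the paper, and the difference is worth spelling out. You work directly in $\SL_k(\Z)$ via the congruence subgroup property, the classification of normal subgroups of $\SL_k(\Z/p^a\Z)$, the minimal faithful permutation degree of $\mathrm{PSL}_k(\Z/p^a\Z)$, and a Goursat-type reduction across prime factors. Each of these ingredients is genuine and nontrivial; in particular, establishing that the minimal degree of a permutation action of $\SL_k(\Z/p^{c+1}\Z)$ with central kernel is $\asymp p^{(c+1)(k-1)}$ requires analyzing how maximal subgroups of $\SL_k(\mathbb F_p)$ lift through the congruence filtration and controlling the $\bar M$-module structure of $\mathfrak{sl}_k(\mathbb F_p)$ at each step --- this is correct but is not ``standard'' in the sense of being citable without work. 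Likewise, your Goursat step needs the observation that the normal core $\overline B_0$ of $\overline B$ in $\prod_i \SL_k(\Z/p_i^{a_i}\Z)$ is (up to bounded center) a product $\prod_i N_i$, so that $g_n\notin\overline B_0$ forces some $g_n^{(i)}\notin N_i$, and then the permutation-degree bound applies to the faithful action of $G_i/N_i$ on the $\overline B$-cosets. This can be made rigorous, but as written both steps are sketches.

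The paper sidesteps all of this with a single elementary observation: for any subgroup $H\le G$ and $g\in H$, one has $D_G(g)\ge D_H(g)$ (intersect with $H$). Taking $H$ to be the $(2k-3)$-dimensional Heisenberg subgroup generated by $E_{1,j}(1)$ and $E_{j,k}(1)$ for $1<j<k$, and $g_n=E_{1,k}(\lcm(1,\dots,n))$ lying in its center, the paper computes $D_H(g_n)\ge n^{k-1}$ directly from the Mal'cev-basis description of finite-index subgroups of $H$: if $\Delta\le H$ misses $g_n$ then the central coordinate $m_{1,1}$ exceeds $n$, and the $k-2$ commutator relations force $\prod m_{i,i}\ge m_{1,1}^{k-1}$. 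No congruence subgroup property, no permutation degrees, no product decomposition. Your use of Lubotzky--Mozes--Raghunathan for the word-length bound matches the paper exactly; what you are missing is that passing to the nilpotent subgroup makes the index computation completely explicit.
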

\noindent
Since residual finiteness growth is a commensurability invariant (see \S \ref{sec:quantifying}), this theorem computes the residual finiteness growth for any $S$ arithmetic subgroup of $\SL_k(\mathbb{C})$ for $k > 2$ (in the sense of \cite{BK12}).
Our proof of Theorem \ref{thm:arithmetic} relies on a result of Lubotzky, Mozes, and Raghunathan \cite{LMR00} and the structure theory of finite-index subgroups of unipotent groups \cite{MR943928}.

\iffalse
As an application of the previous two theorems, we obtain a new proof of the following result, which also follows by combining a result of Khazhdan with a result of Niblo-Reeves\begin{com}citation needed, like a Wikipedian protestor...\end{com}.
See \S \ref{sec:applications} for the proof.

\begin{cor} \label{cor:arithmeticraAg}
The group $\SL_k(\Z)$, $k > 2$ does not virtually embed into any raAg.
\end{cor}

\begin{proof}
If $\SL_k(\Z)$ virtually embeds into a raAg, then Theorem \ref{thm:introraag} gives that the residual finiteness growth of $\SL_k(\Z)$ is bounded above by a linear function.
This is impossible by Theorem \ref{thm:arithmetic}.
\end{proof}
\begin{com}I moved up the proof, since it is so short.  I think either we can just precede the corollary with ``From Theorem blah and blah we obtain:'', maybe.\end{com}
\begin{com}At minimum, we need to cite Niblo-Reeves for ``(T) implies no fixed-point-free action on a cube complex'', and then say that we're giving a new proof of a known fact.  However, since ``no fixed-point-free action on a cube complex'' is much stronger than ``doesn't virtually embed in a RAAG'', we should only put this corollary if our proof is substantially easier than Kazhdan + Niblo-Reeves.  This probably requires being substantially easier than Kazhdan, since Niblo-Reeves is not very hard (much easier than canonical completion). -- Mark\end{com}
\fi

This article is organized as follows. In \S \ref{sec:background}, we give some necessary background on quantifying residual finiteness, raAgs, and cubical geometry. In the interest of self-containment, we also provide the construction of the canonical completion from \cite{haglundwise} for a special case that is relevant to our framework. In \S \ref{sec:separability}, we generalize Stallings' proof to raAgs to give a proof of Theorem \ref{thm:introraag}.
In \S \ref{sec:arithmetic}, we conclude with a proof of Theorem \ref{thm:arithmetic}.

\subsection*{Acknowledgments}
The authors are grateful to Ian Agol, Benson Farb, Michael Larsen, Feng Luo, and Ben McReynolds for useful and stimulating conversations. K.B. and M.F.H. gratefully acknowledge the hospitality and support given to them from the Ventotene 2013 conference for a week while they worked on some of the material in this paper. K.B. was partially supported by the NSF grant DMS-1405609. The work of M.F.H. was supported by the National Science Foundation under Grant Number NSF 1045119. Finally, the authors would like to thank the referee for several excellent suggestions and comments that have improved the quality of this paper.

\section{Background}\label{sec:background}
\subsection{Quantifying residual finiteness}\label{sec:quantifying}
For previous works on quantifying this basic property see, for instance, \cite{MR2583614}, \cite{PP12}, \cite{MR2784792}, \cite{BM13}, \cite{BK12}, \cite{BM11}, \cite{B11}, \cite{BM10}, and \cite{R12}.
Here we unify the notation used in previous papers and demonstrate that this unification preserves basic properties of residual finiteness growth.
Further, we will see that this unification also elucidates the behavior of residual finiteness functions under commensurability.

Let $A$ be a group with generating set $X$.
Given a class of subgroups $\Omega$ of $A$,
set $\Lambda_\Omega = \cap_{\Delta \in \Omega} \Delta$ (c.f. \cite{BBKM13b}).
The \emph{divisibility function} (c.f. \cite{BM10}, \cite{BM13}, \cite{MR2851069}), $D_A^{\Omega} : A \setminus \Lambda_\Omega \to \N$ is defined by
$$
D_A^{\Omega}(g) = \min \{ [ A : B] : g \notin B \wedge B \in \Omega\}. $$

Define $\F_{A,X}^{\Omega}(n)$ to be the maximal value of $D_A^\Omega$ on the set
$$
\left\{ g :\| g  \|_X \leq n \: \wedge \: g \notin \Lambda_\Omega  \right \},
$$
where $\| \cdot \|_X$ is the word-length norm with respect to $X$.  The growth of $\F_{A,X}^\Omega$ is called the \emph{residual $\Omega$ growth function}.  The growth of $\F_{A,X}^{\Omega}(n)$ is, up to a natural equivalence, independent of the choice of generating set (see Lemmas \ref{lem:inherits} and \ref{lem:supergrouplinear} below).

When $\Omega$ is the class of all subgroups of a residually finite group, $A$, we have $\Lambda_\Omega = \{ 1 \}$ and the growth of $\F_{A,X}^{\Omega}(n)$ is  the \emph{residual finiteness growth} of $A$.
In the case when $\Omega$ consists of all normal subgroups of $A$ and $\Lambda_\Omega = \{1 \}$, the function $\F_{A,X}^\Omega$ is the \emph{normal residual finiteness growth function}.

Our first result demonstrates that the residual $\Omega$ growth of a group is well-behaved under passing to subgroups.
\begin{lem}\label{lem:inherits}
Let $G$ be generated by a set $S$, and let $H\leq G$ be generated by a finite set $L\subset G$.
Let $\Omega$ be a class of subgroups of $G$.
Then there exists $C > 0$ such that $\F_{H,L}^{\Omega \cap H}(n)\leq\F_{G,S}^\Omega(Cn)$ for all $n\geq 1$.
\end{lem}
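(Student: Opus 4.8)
The plan is to relate separating subgroups in $H$ to separating subgroups in $G$ by intersection. Fix $g \in H$ with $g \notin \Lambda_{\Omega \cap H}$; I want to produce a finite-index subgroup of $H$ of controlled index that misses $g$. First I would observe that $\Lambda_{\Omega \cap H} = \Lambda_\Omega \cap H$, so $g \notin \Lambda_\Omega$, and hence $D_G^\Omega(g)$ is defined: there is $B \in \Omega$ with $g \notin B$ and $[G : B] = D_G^\Omega(g) \leq \F_{G,S}^\Omega(\|g\|_S)$. Then $B \cap H \in \Omega \cap H$, $g \notin B \cap H$, and $[H : B \cap H] \leq [G : B]$, so $D_H^{\Omega \cap H}(g) \leq D_G^\Omega(g) \leq \F_{G,S}^\Omega(\|g\|_S)$.

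Next I would pass from word length in $S$ to word length in $L$. Since $L \subset G$ is finite, each $\ell \in L$ is a word in $S^{\pm 1}$; let $C = \max_{\ell \in L} \|\ell\|_S$ (take $C \geq 1$). Then for any $g \in H$ we have $\|g\|_S \leq C \|g\|_L$. Combining with the previous paragraph, for every $g \in H \setminus \Lambda_{\Omega \cap H}$ with $\|g\|_L \leq n$,
\[
D_H^{\Omega \cap H}(g) \leq \F_{G,S}^\Omega(\|g\|_S) \leq \F_{G,S}^\Omega(Cn),
\]
using that $\F_{G,S}^\Omega$ is nondecreasing. Taking the maximum over all such $g$ yields $\F_{H,L}^{\Omega \cap H}(n) \leq \F_{G,S}^\Omega(Cn)$, as desired.

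The only subtlety — and the step I would be most careful about — is the bookkeeping around $\Lambda$: one must check that restricting the class $\Omega$ to $H$ (i.e., forming $\Omega \cap H := \{\Delta \cap H : \Delta \in \Omega\}$, or whichever precise convention the paper uses) interacts correctly with the intersection defining $\Lambda$, so that $g \notin \Lambda_{\Omega \cap H}$ genuinely forces $g \notin \Lambda_\Omega$ and thus makes $D_G^\Omega(g)$ well-defined. Everything else is the routine monotonicity of $\F$ and the index inequality $[H : B \cap H] \leq [G : B]$. Note this argument does not use finite generation of $G$ (only of $H$, to ensure word length in $L$ makes sense and $C$ is finite), consistent with the hypotheses as stated.
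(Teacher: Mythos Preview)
Your proposal is correct and follows essentially the same approach as the paper's proof: both establish $D_H^{\Omega\cap H}(h)\le D_G^{\Omega}(h)$ (you spell out the index inequality $[H:B\cap H]\le[G:B]$ that justifies it) and then use finiteness of $L$ to bound $\|\cdot\|_S$ by $C\|\cdot\|_L$. Your careful remark about $\Lambda_{\Omega\cap H}=\Lambda_\Omega\cap H$ matches exactly what the paper asserts, and your use of monotonicity of $\F_{G,S}^{\Omega}$ is equivalent to the paper's set-inclusion argument.
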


\begin{proof}
By definition of $\Omega$ and $\Omega \cap H$ we have $D_H^{\Omega \cap H}(h) \leq D^{\Omega}_G(h)$ for all $h \in H$ and $h \notin \Lambda_{\Omega \cap H} = \Lambda_\Omega \cap H$.
Hence,

\begin{align} 
\F_{H,L}^{\Omega \cap H}(n) &= \max\{  D_H^{\Omega \cap H} (h) \: :\: \| h \|_L \leq n \wedge h \notin \Lambda_{\Omega \cap H}\} \label{firstlemmaeq1} \\ 
&\leq \max\{ D_G^{\Omega} (h) \:: \: \| h \|_L \leq n \wedge h \notin \Lambda_\Omega \}. \nonumber
\end{align}

\noindent
Further, there exists a $C>0$ such that any element in $L$ can be written in terms of at most $C$ elements of $S$.
Thus,

\begin{equation} \label{firstlemmaeq2}
\{ h \in H : \| h \|_L \leq n \} \subseteq \{ g \in G : \| g \|_S \leq Cn \}.
\end{equation}

\noindent
So by (\ref{firstlemmaeq1}) and (\ref{firstlemmaeq2}), we have that
$$
\F_{H,L}^{\Omega \cap H}(n) \leq
\max\{  D_G^{\Omega} (h) : \| h \|_L \leq n \}
\leq
\max\{  D_G^{\Omega} (g) :  \| g \|_S \leq C n \}
= \F_{G,S}^{\Omega}(C n),$$
as desired.
\end{proof}

Lemma \ref{lem:inherits} shows that the residual $\Omega$ growth of a group does not depend heavily on the choice of generating set.
To this end, we write $f \preceq g$ if there exists $C > 0$ such that $f(n) \leq C g (Cn)$. Further, we write $f \simeq g$ if $f \preceq g$ and $g \preceq f$.
If $f \preceq g$, we say that $g$ \emph{dominates} $f$.
So, in particular, Lemma \ref{lem:inherits} gives that, up to $\simeq$ equivalence, the growth of $\F_{A,X}^\Omega$ does not depend on the choice of generating set.
We can and will often, therefore, drop the decoration indicating the generating set $X$ when dealing with growth functions.

The next result, coupled with Lemma \ref{lem:inherits}, demonstrates that residual $\Omega$ growth is also well-behaved under passing to super groups of finite-index.

\begin{lem} \label{lem:supergrouplinear}
Let $H$ be a finite-index subgroup of a finitely generated group $G$.
Let $H$ be generated by $L$ and $G$ by $S$.
Let $\Omega$ be a class of subgroups of $G$ with $H \in \Omega$.
Then $\F_G^\Omega(n) \preceq \F_H^{\Omega \cap H}(n)$.
\end{lem}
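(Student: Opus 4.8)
The plan is to bound the divisibility function of $G$ on an arbitrary $g \in G \setminus \Lambda_\Omega$ in terms of the divisibility function of $H$ on a suitable element related to $g$. First I would observe that since $H \in \Omega$ and $\Lambda_\Omega = \bigcap_{\Delta \in \Omega}\Delta \subseteq H$, any $g \notin \Lambda_\Omega$ either already lies outside $H$ — in which case $D_G^\Omega(g) \leq [G:H]$, a constant — or lies in $H \setminus \Lambda_\Omega = H \setminus \Lambda_{\Omega \cap H}$. In the latter case, choose $B' \leq H$ with $B' \in \Omega \cap H$, $g \notin B'$, and $[H:B'] = D_H^{\Omega \cap H}(g)$. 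The element $B'$ is already a subgroup of $G$ lying in $\Omega$ (here I am implicitly using that $\Omega \cap H$, as a class of subgroups of $H$ that are also in $\Omega$, consists of subgroups of $G$ in $\Omega$), so it witnesses $g \notin B'$ with index $[G:B'] = [G:H]\cdot[H:B']$. Hence $D_G^\Omega(g) \leq [G:H]\, D_H^{\Omega\cap H}(g)$.

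Combining the two cases, for every $g \in G \setminus \Lambda_\Omega$ we get
$$
D_G^\Omega(g) \leq [G:H]\bigl(D_H^{\Omega \cap H}(g) + 1\bigr),
$$
with the convention that $D_H^{\Omega\cap H}(g)$ is interpreted as $0$ (or simply omitted) when $g \notin H$. Next I would pass from elements to balls: if $\|g\|_S \leq n$ then, since $[G:H]$ is finite, there is a constant $C_0 > 0$ (depending only on $S$, $L$, and a choice of coset representatives) such that some $H$-translate issue is handled — more precisely, one uses the standard fact that a finite-index subgroup $H$ generated by $L$ satisfies $\{h \in H : \|h\|_S \leq n\} \subseteq \{h \in H : \|h\|_L \leq C_0 n\}$ for an appropriate $C_0$ (the reverse of the inclusion (\ref{firstlemmaeq2}) from Lemma \ref{lem:inherits}, using that word metrics on $H$ from $L$ and from $S$ are bi-Lipschitz equivalent on $H$). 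Applying this to the elements $g$ lying in $H$, and using monotonicity of $\F_H^{\Omega\cap H}$, we obtain
$$
\F_G^\Omega(n) = \max_{\substack{\|g\|_S \leq n \\ g \notin \Lambda_\Omega}} D_G^\Omega(g) \leq [G:H]\Bigl(\F_H^{\Omega\cap H}(C_0 n) + 1\Bigr),
$$
which is exactly the statement $\F_G^\Omega(n) \preceq \F_H^{\Omega \cap H}(n)$ after absorbing $[G:H]$ and the additive constant into the constant $C$ of the $\preceq$ relation (noting $\F_H^{\Omega\cap H}(C_0 n) + 1 \leq 2\F_H^{\Omega\cap H}(C_0 n)$ once the latter is at least $1$, and the case where it vanishes is covered by the constant $[G:H]$ bound).

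The main obstacle I anticipate is not any single hard estimate but rather the careful bookkeeping around the set $\Lambda_\Omega$ versus $\Lambda_{\Omega \cap H}$ and the degenerate cases: ensuring that $g \notin \Lambda_\Omega$ genuinely forces either $g \notin H$ or $g \notin \Lambda_{\Omega\cap H}$, and that $D_G^\Omega$ is well-defined on all such $g$ (i.e., that some $B \in \Omega$ misses $g$ — for $g \notin H$ this is $B = H$ itself, and for $g \in H \setminus \Lambda_{\Omega\cap H}$ it follows from the definition of $\Lambda_{\Omega\cap H}$). The comparison of word metrics on $H$ is entirely standard and symmetric to the argument already given in Lemma \ref{lem:inherits}, so no new ideas are needed there. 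Everything else is a routine multiplication of indices and absorption of constants into $\preceq$.
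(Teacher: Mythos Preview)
Your proposal is correct and follows essentially the same route as the paper's proof: the same dichotomy on whether $g\in H$, the same inequality $D_G^\Omega(g)\leq[G:H]\,D_H^{\Omega\cap H}(g)$ for $g\in H$, and the same comparison of $S$- and $L$-word metrics on $H$ (the paper phrases this via the Milnor--Schwarz lemma rather than bi-Lipschitz equivalence, but the content is identical). Your extra additive $+1$ and your attention to $\Lambda_\Omega$ versus $\Lambda_{\Omega\cap H}$ are harmless refinements that the paper handles implicitly.
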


\begin{proof}
$D_G^\Omega(g) \leq [G:H] D_H^{\Omega \cap H}(g)$ if $g \in H$ and because $H \in \Omega$, $D_G^\Omega (g) \leq [G:H]$ for $g \notin H$.
Bringing these facts together gives
\begin{equation} \label{secondlemmaeq1}
\F_{G,S}^{\Omega}(n) = \max\{  D_G^{\Omega} (g) \: :\: \| g \|_S \leq n \} \leq
[G:H]  \max\{  D_H^{\Omega \cap H} (g) \:: \: \| g \|_S \leq n \wedge g \in H \setminus \Lambda_\Omega \}.
\end{equation}

Fix word metrics $d_H$ and $d_G$ for $H$ and $G$ with respect to $L$ and $S$ respectively.
The identity map $H \to G$ is a $(K,0)$ quasi-isometry by the Milnor-Schwarz lemma.
For any element $g \in G$ with $d_G(g,1) \leq n$, either $g \notin H$ or $d_H(g,1) \leq K d_G(g,1) \leq K n$.
Thus, there exists a natural number $C$ such that for all $n$,
\begin{equation} \label{secondlemmaeq2}
\{ h \in H : \| h \|_S \leq n \} \subseteq \{ g \in H : \| g \|_L \leq Cn \}.
\end{equation}
So by (\ref{secondlemmaeq1}) and (\ref{secondlemmaeq2}), we have that
\begin{eqnarray*}
\F_{G,S}^{\Omega}(n) &\leq&
[G:H] \max\{  D_H^{\Omega \cap H} (g) : \| g \|_S \leq n \wedge g \in H \setminus \Lambda_\Omega \} \\
&\leq&
[G:H] \max\{  D_H^{\Omega \cap H} (g) :  \| g \|_L \leq Cn  \wedge g \in H \setminus \Lambda_\Omega \}.
\end{eqnarray*}
Thus,
$\F_{G,S}^{\Omega}(n) \leq [G:H] \F_{H,L}^{\Omega \cap H} (C n),$
as desired.
\end{proof}

Recall that subgroups $G$ and $H$ of $A$ are \emph{commensurable} if $G \cap H$ is finite-index in both $G$ and $H$.
  Lemma \ref{lem:supergrouplinear} and Lemma \ref{lem:inherits} demonstrate that residual $\Omega$ growth, and residual finiteness growth, behave well under commensurability as noted in the following Proposition.

\begin{prop} \label{prop:comm}
Let $G$ be a finitely generated subgroup of $A$.
Let $\Omega$ be a class of subgroups of $G$.
Let $H$ be commensurable with $G$, and let $G \cap H \in \Omega$.
Then $\F_{H}^{\Omega \cap H} (n) \simeq \F_{G}^{\Omega}(n)$.
In particular, $\DM_G(n) \simeq \DM_{H}(n)$.
\end{prop}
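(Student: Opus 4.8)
The natural route is to relate everything to the common finite-index subgroup $K := G\cap H$. Since commensurability of $G$ and $H$ means exactly that $K$ has finite index in each, and since $K\in\Omega$ by hypothesis, all the ingredients of Lemmas~\ref{lem:inherits} and~\ref{lem:supergrouplinear} will be available once I check finite generation: $G$ is finitely generated and $K\leq G$ has finite index, so $K$ is finitely generated (Reidemeister--Schreier), and then $H$, which contains the finite-index finitely generated subgroup $K$, is finitely generated as well. So I would fix finite generating sets for $G$, $H$, and $K$ at the outset.

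Then the proof is two symmetric applications of the pair of Lemmas. For the inclusion $K\leq G$: Lemma~\ref{lem:inherits} gives $\F_K^{\Omega\cap K}\preceq\F_G^{\Omega}$, and Lemma~\ref{lem:supergrouplinear}---whose hypothesis ``$K\in\Omega$'' is precisely the standing assumption $G\cap H\in\Omega$---gives $\F_G^{\Omega}\preceq\F_K^{\Omega\cap K}$, so $\F_G^{\Omega}\simeq\F_K^{\Omega\cap K}$. For the inclusion $K\leq H$, working with the class $\Omega\cap H$ of subgroups of $H$: Lemma~\ref{lem:inherits} gives $\F_K^{(\Omega\cap H)\cap K}\preceq\F_H^{\Omega\cap H}$, and since $K = K\cap H$ with $K\in\Omega$ we have $K\in\Omega\cap H$, so Lemma~\ref{lem:supergrouplinear} gives $\F_H^{\Omega\cap H}\preceq\F_K^{(\Omega\cap H)\cap K}$. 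The small bookkeeping point to verify is that the two ``restricted'' classes agree: because $K\subseteq H$ and every member of $\Omega$ lies in $G$, for $\Delta\in\Omega$ we have $(\Delta\cap H)\cap K=\Delta\cap H=\Delta\cap K$, so $(\Omega\cap H)\cap K=\Omega\cap K$. Hence $\F_H^{\Omega\cap H}\simeq\F_K^{\Omega\cap K}$, and composing the two $\simeq$ relations through $K$ yields $\F_H^{\Omega\cap H}\simeq\F_G^{\Omega}$.

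For the displayed consequence $\DM_G\simeq\DM_H$, I would simply repeat the same four invocations of Lemmas~\ref{lem:inherits} and~\ref{lem:supergrouplinear} for $K\leq G$ and $K\leq H$, but now with $\Omega$ taken to be the class of \emph{all} subgroups at each stage: $K$ is a subgroup of both $G$ and $H$, so the ``$K\in\Omega$'' hypothesis is automatic, and restricting the class of all subgroups of $G$ (or of $H$) to $K$ produces the class of all subgroups of $K$, so the resulting functions are $\DM_G$, $\DM_H$, and $\DM_K$ exactly; this gives $\DM_G\simeq\DM_K\simeq\DM_H$. I do not expect any real obstacle here: the entire argument is a matter of assembling the two previously-proved Lemmas correctly, and the only things that genuinely need care are (i) the finite-generation deductions for $K$ and $H$, (ii) using $G\cap H\in\Omega$ to license the applications of Lemma~\ref{lem:supergrouplinear}, and (iii) the identity $(\Omega\cap H)\cap K=\Omega\cap K$.
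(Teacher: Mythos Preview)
Your proposal is correct and follows essentially the same approach as the paper: pass through $K=G\cap H$ and apply Lemmas~\ref{lem:inherits} and~\ref{lem:supergrouplinear} symmetrically on both sides. You are in fact more careful than the paper in a couple of places---explicitly deducing finite generation of $K$ and $H$, and verifying the bookkeeping identity $(\Omega\cap H)\cap K=\Omega\cap K$---whereas the paper's proof leaves these implicit; for the ``in particular'' clause the paper takes $\Omega$ to be the finite-index subgroups rather than all subgroups, but either choice works and the argument is the same.
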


\begin{proof}
Since $G \cap H$ is a finite-index subgroup of both $G$ and $H$, Lemma \ref{lem:supergrouplinear} gives that
$\F_{G \cap H}^{\Omega \cap H}(n)$  dominates both $\F_H^{\Omega \cap H}(n)$ and $\F_G^{\Omega}(n)$.
Further, Lemma \ref{lem:inherits}, demonstrates that both $\F_H^{\Omega \cap H}(n)$ and $\F_G^{\Omega}(n)$  dominate
$\F_{G \cap H}^{\Omega \cap H}(n).$
Thus,
$\F_H^{\Omega \cap H} (n) \simeq \F_{H}^{\Omega \cap H}(n) \simeq \F_G^\Omega (n),$
as desired.

In the case when $\Omega$ is the set of all finite-index subgroups of $G$, we see that $\Omega \cap G \cap H$ is precisely the set of all finite-index subgroups of $G \cap H$.
Thus, we have
$\DM_G(n) \preceq \DM_{G \cap H} (n)$.
So as $G \cap H$ is finite-index in $H$, we similarly achieve
$\DM_H(n) \preceq \DM_{G \cap H} (n)$.
So by Lemma \ref{lem:inherits}, we are done.
\end{proof}

\subsection{Right-angled Artin groups and virtually special groups}\label{subsec:raag_background}

\subsubsection{Right-angled Artin groups}\label{subsubsec:raags}

Right-angled Artin groups (raAgs) are a widely-studied class of groups  (see~\cite{charney_raag_survey} for a comprehensive survey).
These groups  have great utility in geometric group theory both because the class of subgroups of raAgs is very rich and because raAgs are fundamental groups of particularly nice nonpositively-curved cube complexes.
For each finite simplicial graph $\Gamma$, the associated finitely generated raAg $A_{\Gamma}$ is given by the presentation $$\left\langle x_i \in \verts(\Gamma)\,\mid\, [x_j, x_k] = x_jx_kx_j^{-1}x_k^{-1},\,\{x_j,x_k\}\in\edges(\Gamma)\right\rangle.$$

For example, if $\Gamma$ has no edges, then $A_{\Gamma}$ is the free group, freely generated by $\verts(\Gamma)$, while $A_{\Gamma}\cong\integers^{|\verts(\Gamma)|}$ when $\Gamma$ is a clique.  More generally, $A_{\Gamma}$ decomposes as the free product of the raAgs associated to the various components of $\Gamma$, and if $\Gamma$ is the join of subgraphs $\Gamma_1,\Gamma_2$, then $A_{\Gamma}\cong A_{\Gamma_1}\times A_{\Gamma_2}$.

\subsubsection{Nonpositively-curved cube complexes}\label{subsubsec:npccc}
We recall some basic notions about nonpositively-curved cube complexes that will be required below.  More comprehensive discussions of CAT(0) and nonpositively-curved cube complexes can be found in, e.g.,~\cite{Chepoi:cube_median,HagenPhD,Sageev:cubes_95,Wise:QCH,cbmsnotes}.  We largely follow the discussion in~\cite{cbmsnotes}.

For $d\geq 0$, a \emph{$d$-cube} is a metric space isometric to $[-\frac{1}{2},\frac{1}{2}]^d$ with its $\ell^1$ metric.  A \emph{$d'$-face} of the $d$-cube $C$ is a subspace obtained by restricting $d-d'$ of the coordinates to $\pm\frac{1}{2}$, while a \emph{midcube} of $C$ is a subspace obtained by restricting exactly one coordinate to 0.  A \emph{cube complex} is a CW-complex whose cells are cubes of various dimensions and whose attaching maps restrict to combinatorial isometries on faces.

Let $X$ be a cube complex and let $x\in X$ be a 0-cube.  The \emph{link} $\link(x)$ of $x$ is the simplicial complex with an $n$-simplex $\sigma_c$ for each $(n+1)$-cube $c$ containing $x$, with the property that $\sigma_{c}\cap\sigma_{c'}=\cup_{c''}\sigma_{c''}$, where $c''$ varies over the constituent cubes of $c\cap c'$.  A simplicial complex is \emph{flag} if each $(n+1)$-clique in the 1-skeleton spans an $n$-simplex, and $X$ is \emph{nonpositively-curved} if $\link(x)$ is flag for each $x\in X^{(0)}$.  If the nonpositively-curved cube complex $X$ is simply connected, then $X$ is a \emph{CAT(0)} cube complex, so named for the existence of a natural piecewise-Euclidean CAT(0) metric~\cite{gromov, bridson, leary}.

All maps of nonpositively-curved cube complexes in this paper are, unless stated otherwise, \emph{cubical maps}, i.e. they send open $d$-cubes isomorphically to open $d$-cubes for $d>0$ and send 0-cubes to 0-cubes.

\subsubsection{Special cube complexes}\label{subsubsec:salvetti}
Special cube complexes were defined in~\cite{haglundwise} in terms of the absence of certain pathological configurations of immersed hyperplanes.  Here, we are interested in the characterization of special cube complexes in terms of raAgs, established in the same paper.

Let $\Gamma$ be a simplicial graph and $A_{\Gamma}$ the associated raAg.  The \emph{Salvetti complex} $S_{\Gamma}$ associated to $\Gamma$ is a $K(A_{\Gamma},1)$ cube complex, first constructed in~\cite{charney_davis}, that we now describe.  $S_{\Gamma}$ has a single 0-cube $v$ and a 1-cube $e_{x_i}$ for each $x_i\in\verts(\Gamma)$.  For each relation $[x_j,x_k]$ in the above presentation of $A_{\Gamma}$, we add a 2-cube with attaching map $e_{x_j}e_{x_k}e_{x_j}^{-1}e_{x_k}^{-1}$.  Finally, we add an $n$-cube for each size-$n$ set of pairwise-commuting generators.  Note that the image in $S_{\Gamma}$ of each $n$-cube is an embedded $n$-torus and $S_{\Gamma}$ is a nonpositively-curved cube complex.

The cubical map $f:Y\rightarrow X$ of nonpositively-curved cube complexes is a \emph{local isometry} if the following conditions are satisfied:
\begin{enumerate}
\item $f$ is locally injective; equivalently, the induced map $\link(x)\rightarrow\link(f(x))$ is injective for each $x\in X^{(0)}$, and
\item for each $x\in X^{(0)}$, the subcomplex $\link(x)\subseteq\link(f(x))$ is an \emph{induced} subcomplex in the sense that $n+1$ vertices of $\link(x)$ span an $n$-simplex whenever their images in $\link(f(x))$ span an $n$-simplex.
\end{enumerate}
If $X, Y$ are CAT(0) cube complexes, and there is an injective local isometry $Y\rightarrow X$, then $Y$ is \emph{convex} in $X$.  More generally, if $X,Y$ are nonpositively-curved and there is a local isometry $Y\rightarrow X$, then the image of $Y$ is \emph{locally convex} in $X$. It should be noted that covering maps of nonpositively-curved cube complexes are local isometries.

\begin{rem}[Cubical convexity]\label{rem:geometry_in_x}
The term ``convex'' is justified by the fact that if $Y$ is a convex subcomplex of the CAT(0) cube complex $X$, in the preceding sense, then $Y^{(1)}$, with the usual graph metric, is metrically convex in $X^{(1)}$.  When working with a CAT(0) cube complex $X$, we will only use the usual graph metric on $X^{(1)}$, ignoring the CAT(0) metric.  In particular, a \emph{(combinatorial) geodesic} in $X$ shall be understood to be a path in $X^{(1)}$ with a minimal number of edges among all paths with the given endpoints.  Equivalently, a combinatorial path $\gamma\rightarrow X$ is a geodesic if and only if each hyperplane of $X$ intersects at most one 1-cube of $\gamma$, and a connected subcomplex $Y$ of $X$ has isometrically embedded 1-skeleton if and only if it has connected intersection with each hyperplane.  We will refer to a connected subcomplex of $X$ as \emph{isometrically embedded} if it has the latter property.
\end{rem}

The notion of a cubical local isometry yields an elegant characterization of special cube complexes (see~\cite{haglundwise}) which we shall take to be the definition:

\begin{defn}[Special cube complex, virtually special group]\label{defn:special}
The nonpositively-curved cube complex $X$ is \emph{special} if there exists a local isometry $X\rightarrow S_{\Gamma}$ for some simplicial graph $\Gamma$.  The group $G$ is \emph{[virtually] special} if there exists a special cube complex $X$ having [a finite-index subgroup of] $G$ as its fundamental group.  If this cube complex can be chosen to be compact, then $G$ is \emph{virtually compact special}.
\end{defn}

\subsubsection{Canonical completion}\label{subsubsec:can_com}
A substantial part of the utility of special cube complexes is the fact that they behave in several important ways like graphs.  Chief among the graph-like features of special cube complexes is the ability to extend compact local isometries to covers, generalizing the fact that finite immersions of graphs extend to covering maps~\cite{stallings}.  This procedure, introduced in~\cite{haglundwise} and outlined presently, is called ``canonical completion''.  Since it is more directly suited to our situation, we will follow the discussion in~\cite{haglund_wise_amalgams}; in the interest of a relatively self-contained exposition, we now sketch the special case of the construction in~\cite[Definition~3.2]{haglund_wise_amalgams} that we will later require.

\begin{thm}\label{thm:cancom}[Canonical completion for Salvetti complexes]
Let $Y$ be a compact cube complex, and let $f:Y\rightarrow S_\Gamma$ be a local isometry, where $S_\Gamma$ is the Salvetti complex of a raAg $A_\Gamma$.  Then there exists a finite-sheeted cover $\widehat S_\Gamma \rightarrow S_\Gamma$ such that $f$ lifts to an embedding $\hat f:Y\rightarrow\widehat S_\Gamma$.
\end{thm}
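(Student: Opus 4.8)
The plan is to run Stallings' argument that finite immersions of graphs extend to covers, in the form given by Haglund--Wise for special cube complexes, organised around the amalgam (HNN) structure of $A_\Gamma$. The first step is to translate $f$ into combinatorial data. Since $S_\Gamma$ has a single $0$-cube $v$, every $0$-cube of $Y$ maps to $v$ and every $1$-cube maps isomorphically onto some $e_{x_i}$, so (after fixing orientations of the $e_{x_i}$) the $1$-cubes of $Y$ acquire labels in $\verts(\Gamma)$ and orientations. By part (1) of the definition of a local isometry, $\link(y)\to\link(v)$ is injective for each $0$-cube $y$, so $y$ has at most one outgoing and at most one incoming $x_i$-edge; hence $\tau_i(y):=$ the far endpoint of the outgoing $x_i$-edge at $y$, when it exists, is a well-defined partial injection $\tau_i$ of the finite set $Y^{(0)}$. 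Since the simplices of $\link(v)$ are exactly the (sign-decorated) cliques of $\Gamma$, part (2) of the definition forces a square of $Y$ at any $y$ carrying both an $x_i$- and an $x_j$-edge with $\{x_i,x_j\}\in\edges(\Gamma)$; consequently $\tau_i$ and $\tau_j$ commute wherever both are defined, and the $\tau_i$ assemble into a partial action of $A_\Gamma$ on $Y^{(0)}$ from which $Y$ --- vertices, edges, and all higher cubes --- is recovered.

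The goal then becomes: find a finite set $V\supseteq Y^{(0)}$ and a homomorphism $\rho\colon A_\Gamma\to\mathrm{Sym}(V)$ such that $\rho(x_i)$ restricts on $Y^{(0)}$ to an extension of $\tau_i$ for every $i$. From such a $\rho$ one builds $\widehat S_\Gamma$ with $0$-cube set $V$, an $x_i$-edge from $u$ to $\rho(x_i)u$ for each $u\in V$ and each $i$, and then the squares --- this is exactly where the relations $\rho(x_i)\rho(x_j)=\rho(x_j)\rho(x_i)$ for $\{x_i,x_j\}\in\edges(\Gamma)$ are used --- and higher cubes forced by the cubical structure of $S_\Gamma$ together with flagness; this is a degree-$|V|$ cover of $S_\Gamma$, and the subcomplex it spans on $Y^{(0)}$ and the $\tau_i$-edges is a copy of $Y$, providing the lift $\hat f$. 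I would produce $\rho$ by induction on $|\verts(\Gamma)|$. When $|\verts(\Gamma)|=1$ one only needs to extend a single partial injection of a finite set to a permutation of that same set, which is always possible; this is precisely Stallings' completion of an immersion of graphs.

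For the inductive step, pick $x\in\verts(\Gamma)$, put $\Gamma'=\Gamma\setminus x$ and $L=\link_\Gamma(x)$, and use the decomposition $S_\Gamma=S_{\Gamma'}\cup_{S_L}N(H_x)$, where $N(H_x)\cong S_L\times[-\frac{1}{2},\frac{1}{2}]$ is the carrier of the hyperplane dual to $e_x$ (equivalently, $A_\Gamma$ is the HNN extension of $A_{\Gamma'}$ along the identity of $A_L$ with stable letter $x$). The subcomplex $Y'=f^{-1}(S_{\Gamma'})$ has $(Y')^{(0)}=Y^{(0)}$ and carries an induced local isometry to $S_{\Gamma'}$, so by induction there is a finite cover $\widehat S_{\Gamma'}$ with $0$-cubes $V\supseteq Y^{(0)}$, an embedding $Y'\hookrightarrow\widehat S_{\Gamma'}$, and permutations $\sigma_j=\rho'(x_j)$ for $x_j\in\verts(\Gamma')$. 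It then remains to choose a permutation $\sigma_x$ of $V$ that extends $\tau_x$ and commutes with every $\sigma_j$ for $x_j\in L$; with this, $\rho:=\rho'$ on $A_{\Gamma'}$ together with $\rho(x):=\sigma_x$ is the desired homomorphism, since commutation with the $L$-permutations is exactly what lets the new $x$-edges and $x$-cubes close up to a cover of $S_\Gamma$.

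That last choice is the only non-formal point, and I expect it to be the main obstacle. The permutation $\sigma_x$ must be an automorphism of $V$ viewed as a $K$-set, where $K=\langle\sigma_j:x_j\in L\rangle$, and must restrict to $\tau_x$ on $Y^{(0)}$; the partial injection $\tau_x$ is $K$-equivariant as far as it is defined (because $x$ centralises $A_L$ in $A_\Gamma$ and $Y$ contains the squares witnessing those commutations, so an $x$-edge of $Y$ translates along $L$-edge paths of $Y$), but the cover $\widehat S_{\Gamma'}$ handed down by the induction need not have matching $K$-stabilisers at $y$ and $\tau_x(y)$, nor enough room in $\mathrm{Aut}_K(V)$, for the extension to exist outright. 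The resolution --- which is the real content of Haglund--Wise's construction --- is to replace $\widehat S_{\Gamma'}$, before adding $x$-edges, by a suitable further finite cover that still contains $Y'$ but whose $K$-orbit and stabiliser structure accommodates $\tau_x$; that the carrier $N(H_x)$ genuinely splits as $S_L\times[-\frac{1}{2},\frac{1}{2}]$ --- that is, $H_x$ is embedded, two-sided and non-self-osculating, which is exactly specialness of $S_\Gamma$ --- is what makes such a cover available. Granting this, the rest is routine: finiteness of $\widehat S_\Gamma$ follows from compactness of $Y$ (finitely many cubes at each stage), and $\hat f$ is an embedding because it is a lift to a cover of the locally injective cubical map $f$, is injective on $0$-cubes by construction, and a locally injective cubical map of complexes of this kind that is injective on $0$-cubes is globally injective.
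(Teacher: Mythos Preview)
Your setup is correct and your instincts are good, but the proposal has a genuine gap exactly where you flag the ``main obstacle'': you never actually construct the permutation $\sigma_x$ commuting with the $\sigma_j$ for $x_j\in L$, nor the promised further cover of $\widehat S_{\Gamma'}$ that would accommodate it. Saying ``this is the real content of Haglund--Wise's construction'' is a citation, not an argument, and in any case the construction you are gesturing at is considerably more elaborate than what is needed here.

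The paper's proof is both simpler and sharper, and avoids your inductive framework entirely. It extends \emph{every} $\tau_i$ at once to a permutation $\sigma_i$ of the \emph{same} set $Y^{(0)}$ by the obvious recipe: each non-cycle component of the $x_i$-coloured subgraph of $Y^{(1)}$ is an interval or an isolated vertex, and one closes it to a cycle by adding a single $x_i$-edge. That defines $Y^\circ$ with $(Y^\circ)^{(1)}\to S_\Gamma^{(1)}$ already a covering of $1$-skeleta. The entire content is then a lemma showing that for every $2$-cube $c$ of $S_\Gamma$ the boundary path of $c$ lifts to a \emph{closed} path in $Y^\circ$; equivalently, that the $\sigma_i$ so defined satisfy $\sigma_i\sigma_j=\sigma_j\sigma_i$ whenever $\{x_i,x_j\}\in\edges(\Gamma)$. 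This is proved by a direct case analysis exploiting the local-isometry hypothesis: the ``full'' condition forces rectangles $D_1\times D_2\hookrightarrow Y$ along the $Y$-parts of the $C_i$-cycles, and injectivity of the immersion forces the remaining sides to match up. Once the $2$-skeleton closes, higher cubes are attached wherever their $2$-skeleta appear.

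There is a second, more consequential issue with your approach. Your induction explicitly allows $V\supsetneq Y^{(0)}$ and proposes passing to further finite covers, which enlarges the $0$-cube set. The paper's direct construction keeps the $0$-cube set equal to $Y^{(0)}$, and this is precisely what is exploited downstream: the covering degree of $\cancom{Y}{S_\Gamma}\to S_\Gamma$ equals $|Y^{(0)}|$, which is the key input to the linear bound $\DM_{A_\Gamma,X}(n)\le n+1$. Even if your inductive scheme could be completed, it would not deliver this quantitative control. The fix is to abandon the HNN induction, close all colours simultaneously on $Y^{(0)}$, and prove the commutation lemma directly.
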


The space $\widehat S_\Gamma$ is called the \emph{canonical completion} of $f$ and will be denoted by $\cancom{Y}{S_\Gamma}$.

\begin{proof}[Proof of Thm.~\ref{thm:cancom}]
Let $e$ be a (closed) oriented 1-cube of $S_{\Gamma}$.  Each component of the preimage of $e$ in $Y$ is either a cycle, an interval, or a $0$-cube mapping to the base-point, since $Y\rightarrow S_{\Gamma}$ is locally injective.  For each non-cycle component, we add an appropriately oriented open 1-cube to $Y$ to form an oriented cycle covering $e$. The map $f: Y\rightarrow S_{\Gamma}$ extends by declaring the new open 1-cube to map by an orientation-preserving homeomorphism to $\interior{e}$.  Let $Y^\circ$ be the union of $Y$ and all of these new 1-cubes. We thus have a map $\hat f: Y^{\circ}\rightarrow S_{\Gamma}$ that extends $f$ and is a covering map on 1-skeleta. The 1-skeleton of $Y^\circ$ will be the 1-skeleton of $\cancom{Y}{S_\Gamma}$. By Lemma~\ref{lem:Khalids_picture} below, for each 2-cube $c$ of $S_{\Gamma}$, the boundary path of $c$ lifts to a closed path in $Y^{\circ}$, and hence we can attach 2-cubes to $Y$ to form a complex $Y^{\bullet}$ equipped with a cubical map $Y^{\bullet}\rightarrow S_{\Gamma}$ that extends $Y^{\circ}\rightarrow S_{\Gamma}$ and restricts to a covering map on 2-skeleta.  For each higher-dimensional cube $c$ of $S_{\Gamma}$, the 2-skeleton of $c$ lifts to $Y^{\bullet}$, and we form $\cancom{Y}{S_{\Gamma}}$ by adding to $Y^{\bullet}$ each cube whose 2-skeleton appears.  By construction, $\cancom{Y}{S_{\Gamma}}$ covers $S_{\Gamma}$ and is thus non-positively curved.
\end{proof}

\begin{lem}\label{lem:Khalids_picture}
For each 2-cube $c$ of $S_{\Gamma}$, the boundary path
of $c$ lifts to a closed path in $Y^{\circ}$.
\end{lem}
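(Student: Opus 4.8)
\emph{The plan.} Since $\hat f\colon Y^\circ\to S_\Gamma$ restricts to a covering map on $1$-skeleta (as noted in the proof of Theorem~\ref{thm:cancom}), each $0$-cube $y$ of $Y^\circ$ determines a unique lift of $\partial c$ starting at $y$, and the assertion is that this lift is closed. Write $\partial c=e_ae_be_a^{-1}e_b^{-1}$, where $e_a,e_b$ are the $1$-cubes of $S_\Gamma$ associated to adjacent vertices $a,b$ of $\Gamma$. All the data that matter here—the relevant $1$-cubes of $Y^\circ$ and the $2$-cubes of $Y$ over $c$—lie over the sub-torus $T=e_a\cup e_b\cup c\subseteq S_\Gamma$, and $T\hookrightarrow S_\Gamma$ is a local isometry; so I would first replace $Y$ by $f^{-1}(T)$, which inherits a local isometry to $T$, thereby reducing to the case in which $\Gamma$ is a single edge, $S_\Gamma$ is a $2$-torus, and every vertex link of $Y$ is an induced subcomplex of the $4$-cycle $\link(v)$.

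\emph{Main case.} Suppose the $1$-cubes of $Y^\circ$ with tail $y$ mapping to $e_a$ and to $e_b$ are both $1$-cubes of $Y$; equivalently, the two corresponding vertices of $\link(v)$ both lie in $\link(y)$. Because $a$ and $b$ are adjacent, those two vertices span an edge of $\link(v)$, so by the induced-subcomplex condition in the definition of a local isometry they span a $1$-simplex of $\link(y)$; thus $y$ is a corner of a $2$-cube $C$ of $Y$ with $f(C)=c$. The attaching map of $C$ maps combinatorially onto that of $c$, so the boundary path of $C$ read from $y$ is a lift of $\partial c$, and by uniqueness of lifts it is \emph{the} lift—hence closed.

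\emph{Remaining case.} If instead at least one of those two $1$-cubes was adjoined to $Y$ in the completion, I would argue as follows. Forming $Y^\circ$ over $e_a$ replaces each maximal embedded arc $A$ of $f^{-1}(e_a)\cap Y$ by a cycle, adjoining a single $1$-cube from the terminal endpoint of $A$ back to its initial endpoint (a loop when $A$ is a single $0$-cube); let $\alpha$ be the resulting ``slide along $e_a$'' permutation of the $0$-cubes of $Y^\circ$, and $\beta$ the analogous permutation for $e_b$, so that the lemma is precisely the identity $\alpha\beta=\beta\alpha$. The main case gives this wherever both outgoing edges at a $0$-cube are original, and I would propagate it using (i) that a $0$-cube carrying a $1$-cube of $Y$ over $e_a$ and one over $e_b$ carries a $2$-cube of $Y$ over $c$ (the induced-subcomplex condition once more, since any $e_a$-end is adjacent in $\link(v)$ to any $e_b$-end), and (ii) that a $1$-cube of $Y$ over $e_a$ leaves $y$ if and only if one enters $\alpha(y)$. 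Chasing the four edges of the lift, at each adjoined edge one jumps to an endpoint of the corresponding arc, where a genuine $2$-cube of $Y$ over $c$ is supplied by (i), and one chains such $2$-cubes around the square via (ii).

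\emph{Main obstacle.} The real work is this last step: verifying that the chain of $2$-cubes closes up for every pattern of adjoined versus original edges among the four edges of the lift, and in particular dealing with the degenerate patterns in which an arc consists of a single $0$-cube, so that the adjoined $1$-cube is a loop and the corresponding slide fixes a vertex. This case analysis is exactly the bookkeeping that the figure accompanying the lemma (``Khalid's picture'') is meant to organize.
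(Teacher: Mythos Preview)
Your plan is sound and runs parallel to the paper's argument: both reduce to understanding how the $e_a$- and $e_b$-cycles in $Y^\circ$ interact via the $2$-cubes supplied by the local-isometry condition.  Your preliminary reduction to the single-edge Salvetti complex (the $2$-torus) is valid and clarifying, and your permutation reformulation $\alpha\beta=\beta\alpha$ is a clean way to phrase the goal.  Fact~(i) is exactly what the paper uses, and fact~(ii) is correct.

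The substantive difference is in how the ``remaining case'' is dispatched.  You leave the chaining of $2$-cubes as deferred bookkeeping, whereas the paper packages the entire chain in one move: given a non-loop $\gamma_i$, replace it by the arc $D_i\subset Y$ (either $\gamma_i$ itself or its complement $C_i^c$ in the cycle $C_i$), and then observe that the local-isometry condition promotes the wedge $D_1\cup D_2\hookrightarrow Y$ to a full grid $D_1\times D_2\hookrightarrow Y$ of $2$-cubes.  The covering property on $1$-skeleta then forces the opposite sides of this rectangle to coincide with $D_3$ and $D_4$ (else local injectivity would fail at a corner), and since each $D_i$ shares endpoints with $\gamma_i$, closedness of $D_1D_2D_3D_4$ gives closedness of $\gamma_1\gamma_2\gamma_3\gamma_4$.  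The degenerate cases where some $\gamma_i$ is a loop are then handled separately and are short.

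In other words, the rectangle $D_1\times D_2$ \emph{is} your chain of $2$-cubes assembled all at once, and it is what makes the case analysis finite and clean rather than an inductive propagation along arcs.  Your sketch is correct in spirit, but if you flesh it out you will almost certainly end up reconstructing this rectangle; it would strengthen your write-up to state explicitly that the induced-subcomplex condition extends an $L$-shaped pair of arcs to an embedded product of arcs, not merely a single $2$-cube at the corner.
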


\begin{proof}
Let $f : Y \to S_\Gamma$ be the local isometry from the proof of Theorem \ref{thm:cancom}, and also denote by $f$ its extension to $Y^\circ$.
Let $\overline{\gamma} : [0, 4] \to S_\Gamma$ be the %combinatorial path whose image is the same as the image of $\partial_p c$.
boundary path of $c$ and let $\gamma$ be a lift of $\overline{\gamma}$ through $f$.

There exists, by construction, a finite sequence %(not necessarily injective)
$( C_i )_{i=1}^4$ of (not necessarily distinct) cycles in $Y^{\circ}$ such that
\begin{enumerate}
\item $\forall i$, $f(C_i )$ is a 1-cube in $S_\Gamma$;
\item $(C_1\cap \gamma) \cdot (C_2 \cap \gamma) \cdot (C_3 \cap \gamma) \cdot (C_4 \cap \gamma) = \gamma$;  and
\item each $C_i$ has at most one 1-cube not in $Y$.
\end{enumerate}
Set  $\gamma_i = \gamma([i-1, i]) = C_i \cap \gamma$ and set $C_i^c := C_i \setminus \text{int}(\gamma_i)$. We note that when $\gamma_i$ is a loop, $C_i^c$ %trivially
consists of the single vertex $\gamma(i)$.

\begin{figure}[h]
%Feel free to rescale or resive, but if you change the aspect ratio, please add ``grid'' in the brackets, PDFLaTeX, input the new correct coordinates, and then remove ``grid''.
\begin{overpic}[width=.5\textwidth]{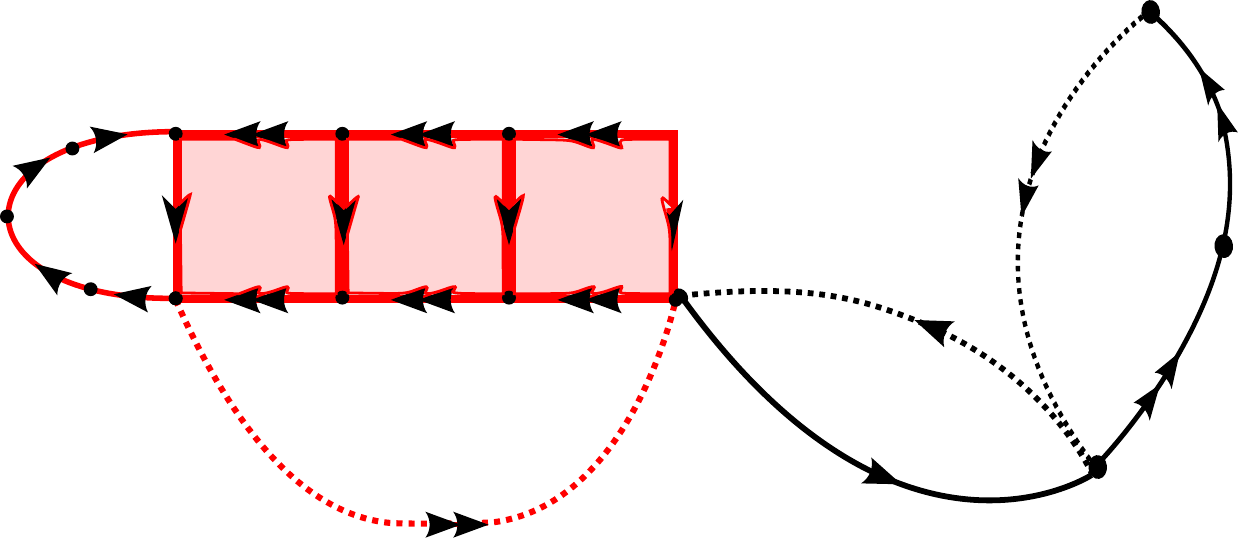}
\put(10,22){\scriptsize{$\mathbf {D_1=\gamma_1}$}}
\put(2,24){\scriptsize{$\mathbf{C_1}$}}
\put(30,15){\scriptsize{$\mathbf {D_2}$}}
\put(30,5){\scriptsize{$\mathbf {\gamma_2}$}}
\put(70,20){\scriptsize{$\mathbf {\gamma_3}$}}
\put(78, 25){\scriptsize{$\mathbf {\gamma_4}$}}
\put(92, 25){\scriptsize{$\mathbf {C_4}$}}
\put(70,8){\scriptsize{$\mathbf {C_3}$}}
\end{overpic}
\caption{}\label{fig:figure_p1}
\end{figure}

{\bf Case 1a:} Suppose that $\gamma_1$ is not a loop in $Y^\circ$ so that $C_1^c$ is not a vertex. Suppose that $\gamma_2$ is also not a loop. Condition (3) above then ensures that either $\gamma_1 \subset Y$, in which case we set $D_1 = \gamma_1$, or $\gamma_1 \subset Y^\circ \setminus Y$ so that $C_1^c \subset Y$ and we set $D_1 = C_1^c$. Similarly, either $\gamma_2 \subset Y$ and $\gamma_2 = D_2$, or $D_2 = C_2^c$. Since $f: Y \rightarrow S_\Gamma$ is a local isometry and $D_1 \cup D_2 \subset Y$, the map $D_1 \cup D_{2} \hookrightarrow Y$ extends to a map $D_1 \times D_{2} \hookrightarrow Y$. Let $\ell_i$ denote the length of the path $D_i$ for $i = 1, 2$.  The map $D_1\times D_2\rightarrow Y$ is a cubical embedding of the standard tiling of $[0,\ell_1]\times[0,\ell_2]$ by 2-cubes. Since $f|: (Y^\circ)^{(1)} \rightarrow S_\Gamma^{(1)}$ is an immersion, the third side of the rectangle $D_1 \times D_2$ must coincide with either $\gamma_3$ or $C_3^c$ (depending on the orientation of the third side of $D_1 \times D_2$) as shown by Figure~\ref{fig:figure_p1}. If $D_1 \times D_2 \cap \gamma_3 \neq \emptyset$ we set $D_3 = \gamma_3$. Otherwise, we set $D_3 = C_3^c$.

The fact that $f$ is a covering map on the 1-skeleta of $Y^\circ$ and $S_\Gamma$ implies that $\ell_1 = \ell_3$, where $\ell_3$ is the length of $D_3$. Indeed, Figure~\ref{fig:figure_p2} shows that if $\ell_1 > \ell_3$, the covering map condition fails at the vertex $v_0 \in f^{-1}(v)$. Similarly, Figure~\ref{fig:figure_p3} shows that if $\ell_1 < \ell_3$, then the covering criterion would fail at the vertex $v_1 \in f^{-1}(v)$. Thus $D_3$ is precisely the third side of $D_1 \times D_2$.

\begin{figure}[h]
\begin{minipage}[b]{.4\textwidth}
\centering
\begin{overpic}[width=0.7\textwidth]{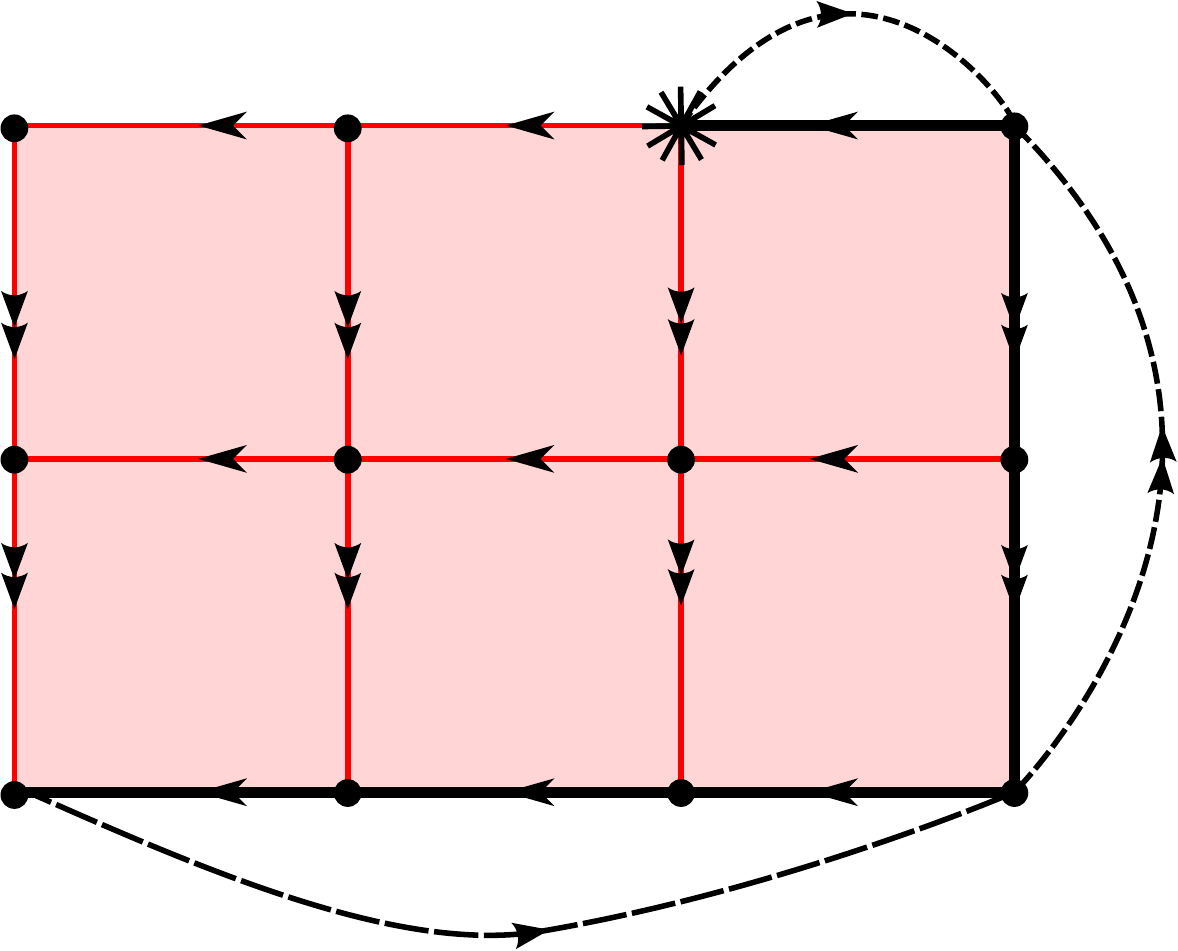}
 \put(49,63){$\mathbf{v_0}$}
 \put(70,60){$\mathbf{D_3}$}
 \put(75,30){$\mathbf{D_2}$}
 \put(41,15.5){$\mathbf{D_1}$}
 \put(41,4){$\mathbf{\gamma_1}$}
 \put(92,61){$\mathbf{\gamma_2}$}
 \put(72,81){$\mathbf{\gamma_3}$}

\end{overpic}
\caption{The map $f$ fails to be locally injective at $v_0$.}\label{fig:figure_p2}
\end{minipage}
\hspace{-1cm}
\begin{minipage}[b]{.4\textwidth}
\centering
\begin{overpic}[width=0.7\textwidth]{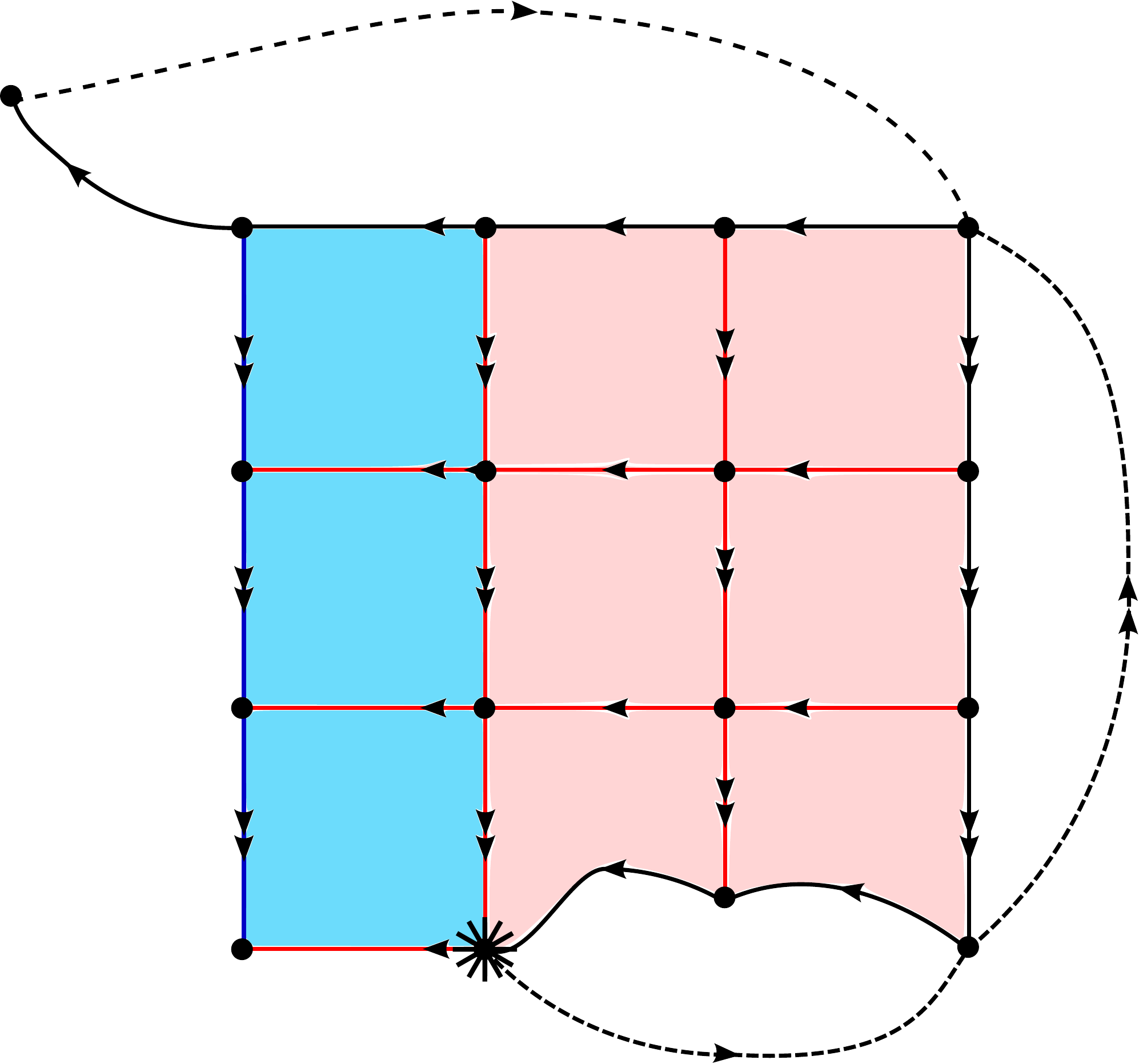}
\put(50,75){$\mathbf{D_3}$}
\put(75,55){$\mathbf{D_2}$}
\put(50,18){$\mathbf{D_1}$}
\put(33,12){$\mathbf{v_1}$}
\put(54,4){$\mathbf{\gamma_1}$}
\put(99,50){$\mathbf{\gamma_2}$}
\put(50,96){$\mathbf{\gamma_3}$}
\end{overpic}
\caption{The map $f$ fails to be locally injective at $v_1$.}\label{fig:figure_p3}
\end{minipage}
\end{figure}

A similar argument shows that the fourth side of $D_1 \times D_2$ must coincide with either $\gamma_4$, in which case we set $D_4= \gamma_4$, or $C_4^c$ so that we set $D_4 = C_4^c$. Again, we can show that $\ell_2 = \ell_4$ so that $D_4$ forms the fourth side of $D_1 \times D_2$.

By construction, each $D_i$ shares endpoints with the $\gamma_i$ for $i = 1, 2, 3, 4$. Thus, $\gamma = \gamma_1 \gamma_2 \gamma_3 \gamma_4$ forms a closed path since $D_1D_2D_3D_4$ forms the closed boundary path of the rectangle $D_1 \times D_2$.

{\bf Case 1b:} Suppose that $\gamma_1$ is not a loop and that $\gamma_2$ is a loop. In this case, $\gamma_3$ necessarily equals $\gamma_1^{-1}$ since $\gamma$ is a lift of a path in $S_\Gamma$ representing a commutator relation in $A_\Gamma$.

If $\gamma_4$ is also a loop, then $\gamma$ is a closed path. Hence suppose that $\gamma_4$ is not a loop. Then either $\gamma_i \subset Y$ and we set $D_i = \gamma_i$, or $D_i = C_i^c$  for $ i = 1, 4$. Thus the map $D_1 \cup D_{4} \hookrightarrow Y$ extends to a map $D_1 \times D_{4} \hookrightarrow Y$, so that we have an embedded rectangle $D_1 \times D_4$ in $Y$. Now, a similar argument as in the previous case shows that $\gamma_2$ must be the third side of $D_1 \times D_4$, contradicting the fact that $D_1 \times D_4$ is embedded. Therefore, $\gamma_4$ must be a loop and $\gamma$ is a closed path in $Y^\circ$.

{\bf Case 2:} Suppose that $\gamma_1$ is a loop in $Y^\circ$. If $\gamma_2$ is also a loop, $\gamma$ is of course a closed path in $Y^\circ$. Therefore, we assume that $\gamma_2$ is not a loop. Note that if $\gamma_3$ is a loop, then $\gamma_4$ necessarily equals $\gamma_2^{-1}$ and $\gamma$ is a closed path.

Assume for contradiction that $\gamma_3$ is not a loop. Then either $\gamma_i \subset Y$ and we set $D_i = \gamma_i$, or $D_i = C_i^c$  for $ i = 2, 3$, and thus, we have an embedded rectangle $D_2 \times D_{3}$ in $Y$. As in the previous case, $\gamma_1$ must form another side of $D_2 \times D_3$, contradicting the fact that $D_2 \times D_3$ is embedded. Thus, $\gamma_3$ must be a loop and $\gamma$ is a closed path in $Y^\circ$.
\end{proof}

The following simple observation plays a crucial role in the proof of Theorem~\ref{thm:introraag}.

\begin{lem}\label{lem:cancom_one_step}
The nonpositively-curved cube complex $\cancom{Y}{S_{\Gamma}}$ is connected when $Y$ is connected, and $\left|\cancom{Y}{S_{\Gamma}}^{(0)}\right|=|Y^{(0)}|$.  Hence $\deg\left(\cancom{Y}{S_{\Gamma}}\rightarrow S_{\Gamma}\right)=|Y^{(0)}|$.
\end{lem}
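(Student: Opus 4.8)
The plan is to track the $0$-cubes through each stage of the construction of $\cancom{Y}{S_\Gamma}$ given in the proof of Theorem~\ref{thm:cancom}, and observe that no new $0$-cubes are ever introduced. Recall that $S_\Gamma$ has a single $0$-cube $v$. In building $Y^\circ$ from $Y$, we only add \emph{open} $1$-cubes: for each non-cycle component of the preimage of an oriented $1$-cube $e$ of $S_\Gamma$ we glue in an open $1$-cube to complete an oriented cycle covering $e$. Such an open $1$-cube is attached to $0$-cubes already present in $Y$ (the endpoints of the interval component, or the single $0$-cube of a degenerate component), so $\left(Y^\circ\right)^{(0)}=Y^{(0)}$. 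Passing from $Y^\circ$ to $Y^\bullet$ only attaches $2$-cubes along already-existing closed edge-paths (this is exactly what Lemma~\ref{lem:Khalids_picture} guarantees), and passing from $Y^\bullet$ to $\cancom{Y}{S_\Gamma}$ only fills in higher cubes whose $2$-skeleta already appear; neither step adds $0$-cubes. Hence $\left|\cancom{Y}{S_\Gamma}^{(0)}\right|=|Y^{(0)}|$.

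For connectedness, I would argue that $Y^{(1)}\subseteq\cancom{Y}{S_\Gamma}^{(1)}$ and that every $1$-cube of $\cancom{Y}{S_\Gamma}$ not already in $Y$ is one of the added open $1$-cubes, whose two endpoints lie in $Y^{(0)}=Y^{(1)}\cap\cancom{Y}{S_\Gamma}^{(0)}$. Thus any $0$-cube of $\cancom{Y}{S_\Gamma}$ is a $0$-cube of $Y$, and since $Y$ is connected, any two $0$-cubes are joined by an edge-path in $Y\subseteq\cancom{Y}{S_\Gamma}$; as $\cancom{Y}{S_\Gamma}$ is a cube complex, path-connectedness of its $1$-skeleton gives connectedness of the whole complex.

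Finally, the degree statement: by Theorem~\ref{thm:cancom} the map $\cancom{Y}{S_\Gamma}\rightarrow S_\Gamma$ is a covering map, and the number of sheets of a covering of a connected complex equals the cardinality of the preimage of any point. Taking the point to be the unique $0$-cube $v$ of $S_\Gamma$, and noting that $f$ and all its extensions are cubical maps sending $0$-cubes to $0$-cubes, the preimage of $v$ is exactly $\cancom{Y}{S_\Gamma}^{(0)}$, which has cardinality $|Y^{(0)}|$. Therefore $\deg\left(\cancom{Y}{S_\Gamma}\rightarrow S_\Gamma\right)=|Y^{(0)}|$.

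I do not anticipate a serious obstacle here; the only point requiring mild care is being explicit that the open $1$-cubes added in forming $Y^\circ$ genuinely have both endpoints among the pre-existing $0$-cubes (including the degenerate case where an interval component of the preimage of $e$ is a single $0$-cube, so the added cycle is a loop based there), and that subsequent stages are purely $2$-and-higher-dimensional. This is immediate from the construction but is the one place where a hasty reading might worry about new vertices appearing.
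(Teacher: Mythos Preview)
Your proof is correct and follows exactly the same approach as the paper's: track $0$-cubes through the construction to see none are added, note $Y$ is contained in the completion, and use that $S_\Gamma$ has a unique $0$-cube to read off the degree. The paper's proof is simply a terse three-sentence version of what you have written out in full detail.
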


\begin{proof}
The first assertion is immediate from the construction of $\cancom{Y}{S_{\Gamma}}$.  The second follows from the fact that $\cancom{Y}{S_{\Gamma}}$ contains $Y$ and does not contain any 0-cube not in $Y$.  This, together with the fact that $S_{\Gamma}$ has a single 0-cube, implies the third assertion.
\end{proof}

\subsubsection{Structure of $\widetilde S_{\Gamma}$}\label{subsec:standard_flats}
Let $\Gamma$ be a finite simplicial graph, and let $S_{\Gamma}$ be the Salvetti complex of $A_{\Gamma}$.  Recall that for each cube $c\rightarrow S_{\Gamma}$, the attaching map identifies opposite faces of $c$, so that the image of $c$ is an embedded $\dimension c$-torus.  Such a torus is a \emph{standard torus} of $S_{\Gamma}$, and a standard torus $T\subseteq S_{\Gamma}$ is \emph{maximal} if it is not properly contained in a standard torus.  (We emphasize that 0-cubes and 1-cubes in $S_{\Gamma}$ are also standard tori.)  The inclusion $T_n\rightarrow S_{\Gamma}$ of the standard $n$-torus $T_n$ lifts to an isometric embedding $\widetilde T_n\rightarrow\widetilde S_{\Gamma}$ of universal covers.  In fact, $\widetilde T_n$ has a natural CAT(0) cubical structure obtained by pulling back the cell structure on $T_n$: as a cube complex, $\widetilde T_n$ is the standard tiling of $\Euclidean^n$ by unit $n$-cubes.  Such a subcomplex $\widetilde T_n\subseteq\widetilde S_{\Gamma}$ is a \emph{standard flat} (and a \emph{maximal standard flat} if $T_n$ is a maximal standard torus).  Since the inclusion $T_n\hookrightarrow S_{\Gamma}$ is easily seen to be a local isometry, $\widetilde T_n\subseteq\widetilde S_{\Gamma}$ is a convex subcomplex.

\section{Virtually special groups}\label{sec:separability}

This section presents a proof of Theorem \ref{thm:ssraag}. To this end, let $\Gamma$ be a simplicial graph, let $A_\Gamma$ be the corresponding raAg, and let $S_\Gamma$ be the corresponding Salvetti complex.  The \emph{label} of a 1-cube $e$ of $\widetilde S_\Gamma$ is the 1-cube of $S_\Gamma$ to which $e$ maps.  For each hyperplane $H$ of $\widetilde S_\Gamma$, the 1-cubes dual to $H$ all have the same label, which we call the \emph{label} of $H$.  Let $H$ be labeled by $a$.  Then $\widetilde S_\Gamma$ has a convex subcomplex $P(H) = H\times L_a$, where $L_a$ is a convex combinatorial line all of whose 1-cubes are labelled by $a$.

\newcommand{\fram}[2]{\mathcal F_{#2}(#1)}

\begin{lem}\label{lem:intersect}
Let $K\subset\widetilde S_\Gamma$ be a convex subcomplex and let $H$ be a hyperplane such that $H\cap K\neq\emptyset$. Then $P(H)\cap K=(H\cap K)\times L'_a$, where $L'_a$ is a combinatorial subinterval of $L_a$ of length at least one.
\end{lem}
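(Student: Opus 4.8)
The plan is to exploit the product structure $P(H) = H \times L_a$ together with convexity of $K$, and to reduce the statement to two separate claims: first, that $P(H) \cap K$ splits as a product of its projections to the $H$-factor and the $L_a$-factor; second, that the projection to the $L_a$-factor is a subinterval of length at least one.

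First I would set up coordinates. Write $P(H) = H \times L_a$ and let $p_H \colon P(H) \to H$ and $p_L \colon P(H) \to L_a$ be the projections. Since $P(H)$ is a convex subcomplex of the CAT(0) cube complex $\widetilde S_\Gamma$ (this is part of the standard structure recalled just before the lemma), and $K$ is convex, the intersection $P(H) \cap K$ is convex in $\widetilde S_\Gamma$, hence convex in $P(H)$. The key cubical fact I would invoke is that a convex subcomplex of a product $A \times B$ of CAT(0) cube complexes is itself a product $A' \times B'$ with $A' \subseteq A$, $B' \subseteq B$ convex; equivalently, one can check this via hyperplanes, since the hyperplanes of $P(H)$ are partitioned into those ``coming from $H$'' and the single family dual to $L_a$, and a connected subcomplex is convex iff it has connected (hence convex) intersection with each hyperplane, as recorded in Remark~\ref{rem:geometry_in_x}. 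So $P(H) \cap K = K_H \times L'_a$ where $K_H = p_H(P(H)\cap K)$ is a convex subcomplex of $H$ and $L'_a = p_L(P(H) \cap K)$ is a convex subcomplex of the line $L_a$, i.e. a subinterval (possibly a single vertex, a priori).

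Next I would identify $K_H$ with $H \cap K$. The hyperplane $H$ sits inside $P(H)$ as $H \times \{m\}$ for the midpoint $m$ of $L_a$ (thinking of $L_a$ as tiled so that $H$ is the midcube level). Under $p_H$, the subcomplex $H \cap K$ maps isomorphically onto its image in $K_H$; conversely, since $H \cap K \neq \emptyset$ by hypothesis, and $P(H) \cap K = K_H \times L'_a$ contains a point of $H = H \times \{m\}$, the interval $L'_a$ must contain $m$ in its interior-level, and for every vertex $w$ of $K_H$ the whole fiber $\{w\} \times L'_a$ lies in $K$; intersecting with $H$ shows $w$ corresponds to a cube of $H \cap K$. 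Running this in both directions gives $K_H = H \cap K$ under the natural identification, so $P(H) \cap K = (H \cap K) \times L'_a$.

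Finally, the length bound: $L'_a$ has length at least one because $H \cap K \neq \emptyset$ means $K$ contains a point on the hyperplane $H$, i.e. $K$ contains a 1-cube $e$ dual to $H$ (a subcomplex meeting a hyperplane must contain a dual 1-cube), and that 1-cube lies in $P(H)$ and projects under $p_L$ to a full 1-cube of $L_a$; hence $L'_a \supseteq p_L(e)$ has length $\geq 1$. I expect the main obstacle to be making the product-decomposition step fully rigorous at the level of cube complexes rather than just CAT(0) geodesic metric spaces — that is, cleanly justifying that convexity of $K$ forces $P(H) \cap K$ to be a genuine subcomplex product $(H\cap K)\times L'_a$ and that the factor $H \cap K$ appearing here is literally the same subcomplex as the one in the hypothesis; everything else is bookkeeping with hyperplanes and the characterization of convexity via connected hyperplane intersections from Remark~\ref{rem:geometry_in_x}.
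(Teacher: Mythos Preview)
Your argument is correct and is essentially the content the paper is invoking: the paper's own proof is a single sentence, ``This follows from convexity of $K$ and Lemma~2.5 of~\cite{CapraceSageev},'' and that lemma of Caprace--Sageev is precisely the product-decomposition fact you are unpacking (a convex subcomplex of $P(H)=H\times L_a$ is a subproduct, i.e.\ of the form $(H\cap K)\times L'_a$). So rather than a different route, you have reproved the cited lemma in situ.

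Two minor points of order, neither fatal. First, the logical flow is cleaner if you deduce $|L'_a|\geq 1$ \emph{before} the identification $K_H=H\cap K$: you already do this implicitly (using $H\cap K\neq\emptyset$ to force the midpoint level $m$ into $L'_a$), but the length bound in your final paragraph is really the same observation. Second, your justification that a convex subcomplex of a product is a product is most easily phrased via halfspaces rather than Remark~\ref{rem:geometry_in_x}: every hyperplane of $H\times L_a$ has the form $V\times L_a$ or $H\times\{m'\}$, so the convex subcomplex $P(H)\cap K$, being an intersection of halfspaces, is automatically $(\text{intersection in }H)\times(\text{intersection in }L_a)$. This removes the worry you flag at the end about making the product step rigorous at the cube-complex level.
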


\begin{proof}
This follows from convexity of $K$ and Lemma~2.5 of~\cite{CapraceSageev}.
\end{proof}

The complex $\fram{H}{K}=P(H)\cap K$ is the \emph{frame} of $H$ in $K$ and is shown in Figure~\ref{fig:frame}.

\begin{figure}[h]
\begin{overpic}[width=0.5\textwidth]{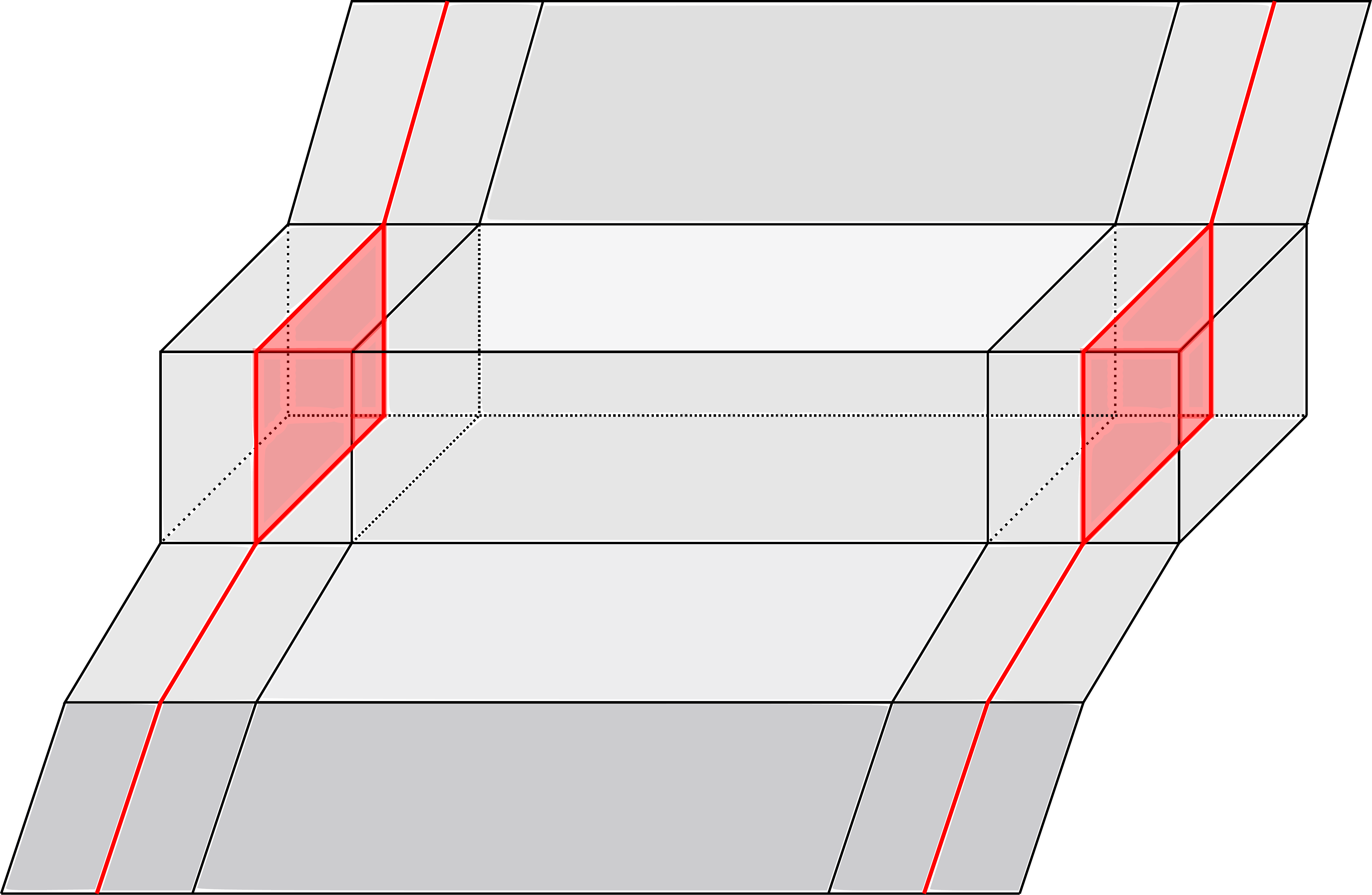}
\end{overpic}
\caption{A frame showing {\color{red}two translates} of the hyperplane $H$.}\label{fig:frame}
\end{figure}

\begin{proof}[Proof of Theorem \ref{thm:ssraag}]
Let $\tilde v$ be a lift of $v$ to $\widetilde S_\Gamma$ and let $g\in A_\Gamma\setminus\{1\}$ and let $\dist(\tilde v, g\tilde v) = n \geq 1$. Let $K$ be the convex hull of $\{\tilde v,g\tilde v\}$.
There exist a set of hyperplanes $H_1,\ldots,H_k$ with the property that each $H_i$ separates $\tilde v,g\tilde v$, such that $H_{i\pm1}\cap K$, for $2\leq i\leq k-1$, lie in two distinct connected components of $K \setminus H_{i}\cap K$.  By passing to a subset if necessary, we can assume that $\fram{H_i}{K}\neq\fram{H_j}{K}$ for $i\neq j$.  For each $i$, we have $\fram{H_i}{K}\cong (H_i\cap K)\times L'_i$, where $L_i'\cong[0,\ell_i]$ with $\ell_i\geq 1$.

By definition of a frame, the fact that $\fram{H_i}{K} \neq \fram{H_{i+1}}{K}$ and that $\stabilizer(H_i)$ is the centralizer of the generator labeling $L_i$,  the labels of $L_i'$ and $L'_{i\pm1}$ are distinct for all $i$.  Moreover, since $H_i\cap H_{i\pm1}=\emptyset$, no 1-cube of $L'_{i\pm1}$ lies in the $(H_i\cap K)$-factor of $\fram{H_i}{K}$ or vice versa.  This fact together with the fact that each hyperplane intersecting $K$ must separate $\tilde v$ from $g\tilde v$, implies that $\fram{H_i}{K}\cap\fram{H_{i+1}}{K}=(H_i\cap K)\times\{\ell_i\}\cap (H_{i+1}\cap K)\times\{0\}$.  Indeed, the intersection involves the $0$ and $\ell_i$ factors only since for any three pairwise non-intersecting hyperplanes of $K$, some pair is separated by the third.
Since $H_i$ separates $H_{i'}$ from $H_{i''}$ when $i'<i<i''$, we have that $\fram{H_i}{K}\cap\fram{H_{i'}}{K}=\emptyset$ if $|i-i'|>1$.

Finally, we can make the above choices so that $\fram{H_i}{K}\cap\fram{H_{i+1}}{K}\neq\emptyset$.  Indeed, were the intersection empty, then by convexity of frames, there would be a hyperplane $H$ separating $\fram{H_i}{K}$ from $\fram{H_{i+1}}{K}$ and hence separating $H_i$ from $H_{i+1}$; such an $H$ could be included in our original sequence and its frame in $K$ in our original sequence of frames.  Moreover, by a similar argument, $\tilde v$ is in the $\fram{H_1}{K}$ and $g \tilde v$ is in $\fram{H_k}{K}$. Hence,  without loss of generality, there is an embedded piecewise-geodesic combinatorial path $\gamma=Q_1L_1'\cdots Q_kL_k'Q_{k+1}$ in $K$ joining $\tilde v$ to $g\tilde v$, where $Q_i\subset (H_i\cap K)\times\{0\}$ for $i\leq k$, $Q_{k+1}\subset (H_{k}\cap K)\times\{\ell_i\}$ and $L_i'$ is chosen within its parallelism class so that the above concatenation exists.  Note that $\sum_i\ell_i\leq n$.

Let $P=\cup_{i=1}^k\fram{H_i}{K}$, so $P$ is connected and contains $\gamma$.  For each $i$, let $\rho_i:\fram{H_i}{K}\rightarrow \overline{H_i\cap K}\times L'_i$ be the cubical quotient induced by identifying the endpoints of each 1-cube of $H_i$ and folding as necessary.  More precisely, for each $i$, we identify the endpoints of each 1-cube of $H_i\cap K$.  This induces a cubical quotient $H_i\rightarrow\overline H_i$.  We then \emph{fold}, i.e. identify cubes $c_1,c_2$ for which $c_1\cup c_2\rightarrow S_\Gamma$ is not locally injective.  (This straightforwadly generalizes Stallings folding for maps of graphs.)  The resulting (folded) quotient is $\overline {H_i\cap K}$, and $\rho_i$ is the induced map acting as the identity on $L'_i$.

Since $\rho_i$ and $\rho_{i+1}$ agree on $\fram{H_i}{K}\cap\fram{H_{i+1}}{K}$, these maps can be pasted together to form a quotient $\rho:P\rightarrow\overline P$ with $\overline P$ a nonpositively-curved cube complex.  Note that the restriction of $\widetilde S_\Gamma\rightarrow S_\Gamma$ descends to a locally injective cubical map $\overline P\rightarrow S_\Gamma$.

We claim that $\rho\circ\gamma$ is a path in $\overline P$ that contains every 0-cube and has distinct endpoints.  This follows from the fact that $\rho(L'_i)\cap\rho(L'_j)$ is a single 0-cube if $|i-j|=1$ and is otherwise empty if $i\neq j$, and $\rho$ is injective on each $L'_i$.  Since $\gamma$ passes through each 1-cube of $\cup_iL'_i$ exactly once, and the image of each $Q_i$ maps to a wedge of circles in $\overline P$, it follows that $\gamma$ has the desired properties.  Hence, $|\overline P^{(0)}|\leq n+1$.

We would like to finish by applying Lemma~\ref{lem:cancom_one_step} to $\overline P$. However, the constructed cube-complex, $\overline P$, is not necessarily locally convex in $S_\Gamma$.
To fix this, let $s= c_i\times c_{i+1}$ be a 2-cube of $K$ such that $c_i$ is a 1-cube in $\fram{H_i}{K}$ and $c_{i+1}$ is a 1-cube in $\fram{H_{i+1}}{K}$, as in Figure~\ref{fig:missing_square}.

\begin{figure}[h]
\begin{overpic}[width=0.25\textwidth]{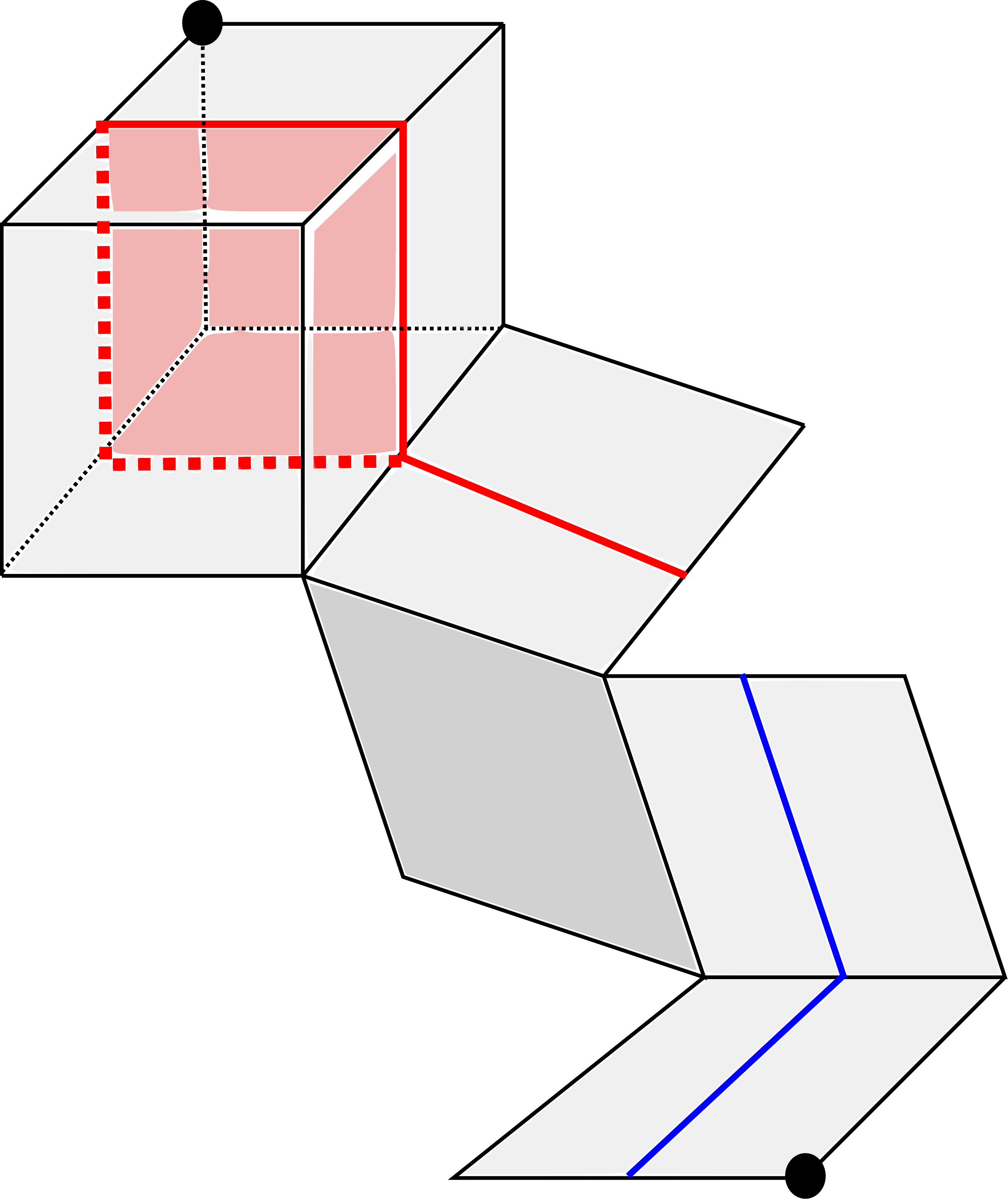}

\end{overpic}
\caption{Two frames and a missing 2-cube.}\label{fig:missing_square}
\end{figure}

Let $\bar s=\rho(c_i)\times\rho(c_{i+1})$.  Without loss of generality, $c_{i+1}\subset H_{i+1}\times\{0\}$.  Indeed, the generators labeling $L'_i$ and $L'_{i+1}$ do not commute, so at most one of $c_i,c_{i+1}$ is in $L_i',L'_{i+1}$.  Hence $\bar s$ is either a cylinder or a torus.  In the latter case, glue $\bar s$ to $\overline P$ along $\rho(c_i)\cup\rho(c_{i+1})$, noting that we do not add 0-cubes in so doing and moreover, we do not add 1-cubes.  Hence no missing corners are introduced.

In the former case, the label of $c_{i+1}$ corresponds to a generator of $A_\Gamma$ that commutes with the generator labeling $L'_i$, and hence $c_{i+1}\subset (H_{i}\cap K)\times\{\ell_i\}\cap (H_{i+1}\cap K)\times\{0\}$. Therefore, $s\subset\fram{H_i}{K}$.  We conclude that the quotient $\rho$ extends to a quotient $K\rightarrow\overline K$ such that $\overline P\subseteq\overline K$ and the restriction of $\widetilde S_\Gamma\rightarrow S_\Gamma$ to $K$ descends to a local isometry $\overline K\rightarrow S_\Gamma$.  Moreover, since $\overline K$ is formed from $\overline P$ by attaching 2-tori as above, and adding higher-dimensional tori when their 2-skeleta appear, we see that $|\overline K^{(0)}|=|\overline P^{(0)}|\leq n+1$.  Hence, $\cancom{\overline K}{S_\Gamma}$ is a cover of $S_\Gamma$ of degree at most $n+1$, by Lemma~\ref{lem:cancom_one_step}, such that $\gamma:[0,n]\rightarrow\widetilde S_\Gamma\rightarrow S_\Gamma$ lifts to a non-closed path in $\cancom{\overline K}{S_\Gamma}$, and the proof is complete.
\end{proof}

\section{Special linear groups} \label{sec:arithmetic}

\subsection{The upper bound}
Fix a generating set for $\SL_k(\Z)$.
Let $g$ be a nontrivial element in the word-metric ball of radius $n$ in $\SL_k(\Z)$.
Since $Z(\SL_k(\Z))$ is finite, we may assume that $g \notin Z(\SL_k(\Z))$.
Thus there exists an off-diagonal entry of $g$ that is not zero or two diagonal entries with non-zero difference. Select $\alpha$ so that it is one of these values and non-zero.
By \cite[Proposition 4.1]{BK12},  there exists some prime $p$ with
$$ p \leq C n$$
for some fixed constant $C$, where the image of $g$ in $\SL_k(\Z/p \Z)$ is not central (that is, $\alpha$ does not vanish in $\Z/p \Z$).
The group $\SL_k(\Z/p\Z)$ has subgroup
$$
\Delta:=\left\{ \begin{pmatrix}
* & \cdots&  * & * \\
\vdots & & \vdots & \vdots \\
* & \cdots & * & *\\
0 & \cdots & 0 & *
\end{pmatrix} \in \SL_k(\Z/p \Z)
: *\text{ entries are arbitrary}\right\}.
$$
Using a dimension counting argument, it is straightforward to see that the index of $\Delta$ in $\SL_k(\Z/p \Z)$ is bounded above by $C' p^{k-1}$ where $C'$ depends only on $k$.
Since $\SL_k(\Z/ p \Z)$ maps onto a simple group with kernel $Z(\SL_k(\Z/ p \Z))$, it follows that the intersection of all conjugates of $\Delta$ is contained in $Z(\SL_k(\Z/ p \Z))$ (note that $\Delta$ contains $Z(\SL_k(\Z/ p \Z))$).
Thus, there exists some conjugate of $\Delta$ that misses the image of $g$, which is not central, in $\SL_k(\Z/ p \Z)$.
Thus, we get
$$
\F_{\SL_k(\Z)} (n) \preceq n^{k-1}.
$$

\subsection{The lower bound}
Here, we show that the residual finiteness growth of $\SL_k(\Z)$, $k > 2$, is bounded below by $n^{k-1}$.
Before we get into the proof, we need a lemma involving unipotent subgroups of $\SL_k(\Z)$.
Let $E_{i,j}(\alpha)$ be the elementary matrix with $\alpha$ in the $i$th row and $j$th column.

\begin{lem} \label{lem:heisenberg}
Let $H$ be the subgroup of $\SL_k(\Z)$ that is the $2k-1$ dimensional generalized Heisenberg group.
Set $g_n = E_{1,k}(\lcm(1, \ldots, n))$.
Then $D_H(g_n) \geq n^{k-1}.$
\end{lem}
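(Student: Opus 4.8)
The plan is to exploit the two-step nilpotent structure of $H$ together with a covolume estimate for sublattices of a symplectic lattice. Recall the relevant structure: the center is $Z=Z(H)=\langle E_{1,k}(1)\rangle\cong\Z$, one has $[H,H]=Z$, and $V:=H/Z$ is free abelian of even rank $2m$, where $2m+1=\dimension H$, so $m\geq k-2$ (in fact $m=k-2$). Since $H$ is nilpotent of class $2$, the commutator map descends to a biadditive alternating form $\omega\colon V\times V\to Z$; after identifying $Z$ with $\Z$ via $E_{1,k}(t)\leftrightarrow t$, a direct computation with the matrices $E_{i,j}$ shows that in the basis of $V$ coming from the $E_{1,i}$ and the $E_{i,k}$ the form $\omega$ is the standard symplectic form, and in particular its Gram matrix $J$ is unimodular. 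Note that $g_n=E_{1,k}(\lcm(1,\ldots,n))$ lies in $Z$.

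Now fix an arbitrary finite-index subgroup $B\leq H$ with $g_n\notin B$; we must show $[H:B]\geq n^{k-1}$. Let $d=[Z:B\cap Z]$, so $B\cap Z=\langle E_{1,k}(d)\rangle$. Since $g_n\in Z\setminus B$ we have $g_n\notin B\cap Z$, i.e. $d\nmid\lcm(1,\ldots,n)$; as every positive integer $\leq n$ divides $\lcm(1,\ldots,n)$, this forces $d\geq n+1$. Next, $[B,B]\subseteq B\cap[H,H]=B\cap Z=\langle E_{1,k}(d)\rangle$, so for $b,b'\in B$ with images $\bar b,\bar b'\in V$ we get $d\mid\omega(\bar b,\bar b')$; hence the sublattice $\Lambda\subseteq V$ of images of elements of $B$ is \emph{isotropic modulo $d$}. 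Finally, applying the diamond isomorphism theorem to $1\to Z\to H\to V\to 1$ gives $[H:B]=[H:BZ]\cdot[BZ:B]=[V:\Lambda]\cdot[Z:B\cap Z]=[V:\Lambda]\cdot d$, so it remains to prove $[V:\Lambda]\geq d^{m}$.

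This inequality is the heart of the matter. Write $\Lambda=A\Z^{2m}$ with $A$ an integer matrix satisfying $|\det A|=[V:\Lambda]$. Isotropy modulo $d$ means precisely that every entry of $A^{\top}JA$ is divisible by $d$, i.e. $A^{\top}JA=dC$ for some integer matrix $C$, and $C\neq 0$ because $\det(A^{\top}JA)=\pm[V:\Lambda]^{2}\neq 0$. Taking determinants, $[V:\Lambda]^{2}=|\det(A^{\top}JA)|=d^{2m}\,|\det C|\geq d^{2m}$, whence $[V:\Lambda]\geq d^{m}$. Combining with the previous paragraph, $[H:B]\geq d^{m+1}\geq(n+1)^{m+1}\geq(n+1)^{k-1}>n^{k-1}$. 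Since $B$ was an arbitrary finite-index subgroup missing $g_n$, this yields $D_H(g_n)\geq n^{k-1}$.

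The only genuinely non-formal step is the determinant estimate — recognizing that a sublattice isotropic modulo $d$ for a unimodular symplectic form has covolume at least $d^{m}$, which is a lattice-theoretic shadow of the structure theory of finite-index subgroups of unipotent groups. Everything else is routine bookkeeping with the central extension $1\to Z\to H\to V\to 1$; one should also record the harmless facts that $g_n\neq 1$ for every $n\geq 1$ and that at least one finite-index $B$ with $g_n\notin B$ exists (so that $D_H(g_n)$ is finite and the statement is not vacuous), the latter because $H/\langle E_{1,k}(\lcm(1,\ldots,n)+1)\rangle$ is a finitely generated nilpotent, hence residually finite, group in which the image of $g_n$ is nontrivial.
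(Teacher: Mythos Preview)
Your argument is correct and takes a genuinely different route from the paper.  The paper invokes the Mal'cev-basis description of finite-index subgroups from~\cite{MR943928}: with the $E_{i,j}(1)$ as a Mal'cev basis one associates to any $\Delta\leq H$ an upper-triangular integer matrix $(m_{i,j})$ with $[H:\Delta]=\prod_i m_{i,i}$ and $m_{1,1}=[Z:\Delta\cap Z]$, and the commutator relations translate into the divisibility conditions $m_{1,1}\mid m_{j,j}\,m_{k+j-2,\,k+j-2}$ for $j=2,\dots,k-1$, from which $\prod_i m_{i,i}\geq m_{1,1}^{k-1}$ follows by multiplying the $k-2$ inequalities and reinserting the factor $m_{1,1}$.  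Your proof bypasses the cited structure theory entirely: you encode the same commutator relations as a unimodular symplectic form on $V=H/Z$, and your determinant trick $\det(A^{\top}JA)=d^{2m}\det C$ is exactly the ``global'' version of those $k-2$ divisibility conditions.  Your approach is more self-contained and arguably more conceptual; the paper's is closer to the explicit coordinate calculations one would need for more general unipotent groups.  One small wrinkle: the statement's ``$2k-1$'' is a typo (the paper's own proof sets the dimension to $d=2k-3$, consistent with the group generated by the $E_{1,j}$ and $E_{j,k}$), and your line ``$2m+1=\dimension H$, so $m\geq k-2$ (in fact $m=k-2$)'' silently corrects it; you might flag this explicitly rather than leave the arithmetic looking inconsistent.
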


\begin{proof}
Let $\Delta$ be a finite-index subgroup of $H$ that does not contain $g_n$.
Set $d = 2k-3$.
By using $E_{i,j}(1)$ as a Mal'cev basis we may associate to $\Delta$ a matrix $\{ m_{i,j} \}$ (see \cite[Lemma 2.3]{MR943928})
with $[\Gamma : \Delta] = \prod_{i=1}^d m_{i,i}$ where $(E_{1,k}(1))^{m_{1,1}} \in \Delta$ and, in fact, we have $k-2$ conditions:
\begin{eqnarray*}
m_{1,1} &\text{ divides }& m_{2,2} m_{k, k}, \\
m_{1,1} &\text{ divides }& m_{3,3} m_{k+1, k+1}, \\
& \vdots &  \\
m_{1,1} &\text{ divides }& m_{k-1,k-1} m_{2k-3, 2k-3}.
\end{eqnarray*}
Thus, $\prod_{i=1}^d m_{i,i} \geq m_{1,1}^{k-1}$.
As $g_n \notin \Delta$, we have that $m_{1,1}$ does not divide $\lcm(1, \ldots, n)$, i.e. $m_{1,1} > n$, so
$$
D_H(g_n) \geq n^{k-1},
$$
as desired.
\end{proof}

We can now prove the lower bound.
We begin by following the first part of the proof of \cite[Theorem 2.6]{B09}.
By \cite[Theorem A]{LMR00}, there exists a finite generating set, $S$, for $\SL_k(\Z)$ (see also Riley \cite{R05}) and a
$C > 0$ satisfying
$$
\| - \|_S \leq C \log( \| - \|_1 ),
$$
where $\| - \|_1$ is the 1-operator norm for matrices.
Thus, as
$\log( \| E_{1,k}(\lcm(1, \ldots, n))  \|_1) = \log(\lcm(1,\ldots, n)) +1 \approx n$
by the prime number theorem, the elementary matrix may be written in terms of at most $Cn$ elements from $S$.
The matrix $g_n := E_{1,k}(\lcm(1, \ldots, n))$ in $\SL_k(\Z)$ is our candidate.

$g_n$ is contained in $H \leq \SL_k(\Z)$ as in Lemma \ref{lem:heisenberg}.
It follows then that
$$
D_{\SL_k(\Z)} (g_n) \geq D_{H} (g_n) \geq n^{k-1}.
$$
Thus, $\F_{\SL_k(\Z)} (n ) \succeq n^{k-1},$ as desired.

\bibliography{refs}
\bibliographystyle{alpha}

\end{document}